\numberwithin{equation}{section}
\numberwithin{table}{section}
\numberwithin{figure}{section}
\newcommand{\B}{\mathbb{B}}
\newcommand{\E}{\mathbb{E}}
\newcommand{\R}{\mathbb{R}}
\newcommand{\1}{\mathds{1}}
\newcommand{\cum}{\mathrm{cum}}
\newcommand{\dd}{\mathrm{d}}
\newcommand{\sgn}{\mathrm{sgn}}
\newcommand{\Sym}{\mathrm{Sym}}
\newcommand{\tr}{\mathrm{tr}}
\newcommand{\Tube}{\mathrm{Tube}}
\newcommand{\Vol}{\mathrm{Vol}}
\newcommand{\cyc}{(*)}
\newcommand{\kk}{\widetilde\kappa}
\newcommand{\s}{\widetilde s}
\newcommand{\iii}{{\sqrt{-1}}}
\newcommand{\iiii}{{\textstyle\frac{1}{\sqrt{-1}}}}
\newcommand{\rhoY}{\rho_Y}
\newcommand\ko{\kappa_{0}}
\newcommand\ki{\kappa_{1}}
\newcommand\kii{\kappa_{11}}
\newcommand\kko{\widetilde\kappa_{0}}
\newcommand\kki{\widetilde\kappa_{1}}
\newcommand\kkiia{\widetilde\kappa_{11}^{a}}
\newcommand\kkiiaa{\widetilde\kappa_{11}^{aa}}
\newcommand\kkiiid{\widetilde\kappa_{111}^{d}}
\newcommand\kkiiia{\widetilde\kappa_{111}^{a}}
\newtheorem{assumption}{Assumption}[section]
\newtheorem{corollary}{Corollary}[section]
\newtheorem{lemma}{Lemma}[section]
\newtheorem{proposition}{Proposition}[section]
\newtheorem{theorem}{Theorem}[section]
\begin{document}

\title[]{Asymptotic expansion of the expected Minkowski functional for isotropic central limit random fields}

\author[]{Satoshi Kuriki}
\email{kuriki@ism.ac.jp}
\address{The Institute of Statistical Mathematics, 10-3 Midoricho, Tachikawa, Tokyo 190-8562, Japan}

\author[]{Takahiko Matsubara}
\email{tmats@post.kek.jp}
\address{Institute of Particle and Nuclear Studies, High Energy Accelerator Research Organization (KEK), Oho 1-1, Tsukuba, Ibaraki 305-0801, Japan}

\begin{abstract}
The Minkowski functionals, including the Euler characteristic statistics, are standard tools for morphological analysis in cosmology.
Motivated by cosmic research, we examine the Minkowski functional of the excursion set for an isotropic central limit random field, the $k$-point correlation functions ($k$th order cumulants) of which have the same structure as that assumed in cosmic research.
Using 3- and 4-point correlation functions,
we derive the asymptotic expansions of the Euler characteristic density,
 which is the building block of the Minkowski functional.
The resulting formula reveals the types of non-Gaussianity that cannot be captured by the Minkowski functionals.
As an example, we consider an isotropic chi-square random field and confirm that the asymptotic expansion accurately approximates the true Euler characteristic density.
\end{abstract}
\keywords{Chi-square random field; Euler characteristic density; Gaussian related random fields; $k$-point correlation function.}

\maketitle

\section{Introduction}
\subsection{Minkowski functional in cosmology}

The Minkowski functional (MF) is a fundamental concept in integral and stochastic geometry.
It is a series of geometric quantities defined for a bounded set in the Euclidean space.
In the 2-dimensional case, the Minkowski functional of the set $M$ is a triplet consisting of the area $\Vol_2(M)$, half-length of the boundary $\frac{1}{2}\Vol_1(\partial M)$, and Euler characteristic (EC) $\chi(M)$ times $\pi$.
The Minkowski functional measures the morphological features of $M$ in a different way from conventional moment-type statistics and has been used in various scientific fields.

In cosmology, the Minkowski functional was introduced around the 1990s, % by Thomas Buchert and his coauthors.
and was first used to analyze the large-scale structure of the universe, and then the cosmic microwave background (CMB).
In particular, the Minkowski functional for the excursion set of the smoothed CMB map was first analyzed by \cite{schmalzing-buchert:1997} (cf.\,\cite{schmalzing-gorski:1998}).
The CMB radiation provides rich information on the early stages of the universe.
Its signal is recognized as an isotropic and nearly Gaussian random field.
However, hundreds of inflationary models are available to infer the various types of non-Gaussianity.
The Minkowski functional is used for the selection of such candidate models.

More precisely, let $X(t)$, $t\in T\subset\R^n$, be such a random field.
The sup-level set with a threshold $x$,
\[
 T_x = \{t\in T \,|\, X(t)\ge x\} = X^{-1}([x,\infty)),
\]
is referred to as the \textit{excursion set}.
Subsequently, the Minkowski functional curves $\mathcal{M}_j(T_x)$, $0\le j\le n$, can be calculated as a function of $x$.
The departure of the sample Minkowski functional from the expected Minkowski functional under the assumed model is used as a measure for the selection of models.

When $X(t)$, $t\in M$, is Gaussian, the Minkowski functional density (i.e., the expected MF per unit volume) is explicitly known (see (\ref{Xi_infty})).
However, the expected Minkowski functional for a non-Gaussian random field is unknown (except for the Gaussian related random fields \cite[Section 5.2]{adler-taylor:2011}).
In cosmology, a weak non-Gaussianity is expressed through the $k$-point correlation function, or the $k$th order cumulant, for $k\ge 2$ as
\begin{equation}
\label{cum_order}
 \cum(X(t_1),\ldots,X(t_k)) = O\bigl(\nu^{k-2}\bigr), \quad \nu\ll 1,
\end{equation}
where $\nu$ is a non-Gaussianity parameter
 (\cite{matsubara:2003}).
These asymptotics are the same as that of the central limit random field introduced by \cite{chamandy-worsley-taylor-gosselin:2008}.
That is, for independent and identically distributed random fields $Z_{(i)}(t)$, $t\in T$ ($i\ge 1$)
with zero mean and unit variance, the \textit{central limit random field} defined by
\begin{equation}
\label{cl_field}
 X(t) = X_N(t) = \frac{1}{\sqrt{N}}\sum_{i=1}^N Z_{(i)}(t), \ \ t\in T\subset\R^n,
\end{equation}
has a cumulant of the form (\ref{cum_order}) with $\nu=1/\sqrt{N}$.
This is a typical weakly non-Gaussian random field when $N$ is large.
In this study, we discuss the asymptotic expansion of the Minkowski functionals in the framework of the central limit random field.

Cosmology and astrophysics research is often based on massive numerical simulations.
Because the computational cost of the simulator is extremely high,
analytic methods, including the asymptotic expansion,
can be immensely helpful in research if they provide the same information as simulators.

Finally, the use of Minkowski functions in astrophysics and recent related topics are further discussed.
The systematic application of the Minkowski functional to Planck CMB data was reported in \cite{planck:2014}.
\cite{fantaye-marinucci:2015} discuss the use of the Minkowski functional for the CMB data in the presence of sky masks.
In addition to the CMB, the Minkowski functionals have been applied to various types of survey observations: 2-dimensional maps of the weak-lensing field of galaxy surveys, and 3-dimensional maps of the large-scale structure of the universe probed by galaxy distributions.
The exact treatment of the 2-dimensional cosmological data as a random field on the celestial sphere is studied (e.g., \cite{marinucci-peccati:2011}, \cite{fantaye-marinucci:2015}).
For recent developments in the applications of the Euler characteristic, Minkowski functional, and related geometric methods,
see \cite{pranav-etal:2019:betti}.

\subsection{Scope of this paper}

In reality, the index set (i.e., the survey area) $T$ is a bounded domain, and boundary corrections should always be incorporated.
Let $\mathcal{M}_{j}(T)$, $0\le j\le n$, be the Minkowski functional of the domain $T$.
Then, if $T$ is a $C^2$-stratified manifold of positive reach (see Section \ref{sec:tube} for the definition), it is known that 
\begin{align}
\label{euler}
 \E[\chi(T_x)]
 = \sum_{j=0}^{n} \mathcal{L}_{j}(T)\,\Xi_{j,N}(x),
\end{align}
if the expectation exists, where
$\mathcal{L}_{j}(T)=\omega_{n-j}^{-1}\binom{n}{j}\mathcal{M}_{n-j}(T)$ is referred to as the Lipschitz-Killing curvature of $T$, $\omega_{j}$ is the volume of the unit ball in $\R^j$, and $\Xi_{j,N}(x)$ is the Euler characteristic density of the random field $X_N(t)$ in (\ref{cl_field}) restricted to the $j$-dimensional linear subspace.
See \cite{worsley:1995}, or \cite[Theorem 1]{chamandy-worsley-taylor-gosselin:2008} for a proof based on Hadwiger's theorem.

Moreover, the expected Minkowski functionals are expressed as
\begin{align}
\label{minkowski}
 \E[\mathcal{L}_{k}(T_x)]
 = \omega_{n-k}^{-1}\binom{n}{k}\E[\mathcal{M}_{n-k}(T_x)]
 = \sum_{j=0}^{n-k} \genfrac{[}{]}{0pt}{}{k+j}{k} \mathcal{L}_{k+j}(T)\,\Xi_{j,N}(x),
\end{align}
where $\genfrac{[}{]}{0pt}{}{k+j}{k}
 = \bigl[\Gamma(\frac{k+j+1}{2})\Gamma(\frac{1}{2})\bigr]/\bigl[\Gamma(\frac{k+1}{2})\Gamma(\frac{j+1}{2})\bigr]$.
A proof based on Crofton's theorem is presented in the next section (Section \ref{sec:tube}).

In this study, we obtain the asymptotic expansion formula for the Euler characteristic density $\Xi_{n,N}(x)$ for a large $N$ and examine the effect of the non-Gaussianity.
The resulting formula also automatically provides the asymptotic expansion formula for the expected Euler characteristic and the expected Minkowski functionals via (\ref{euler}) and (\ref{minkowski}), respectively.
Prior research \cite{matsubara:2003,hikage-komatsu-matsubara:2006} provided a perturbation formula for the expected Minkowski functional up to $O(\nu)$ 
 ($\nu=N^{-\frac{1}{2}}$) 
using 3-point correlation function for dimension $n\le 3$,
 while \cite{matsubara:2010} provided the formula up to $O(\nu^2)$ ($\nu^2=N^{-1}$)
using 3- and 4-point correlation functions for $n=2$.
This study completes the asymptotic expansion formulas up to $O(N^{-1})$
using 3- and 4-point correlation functions for an arbitrary dimension $n$.
However, this does not only complete the existing research.
The discussions for arbitrary $n$ are informative and reveal the properties of the Minkowski functionals (e.g., Theorem \ref{thm:loop}).

In related work, \cite{chamandy-worsley-taylor-gosselin:2008} derived an approximation for the expected Euler characteristic of the excursion set of an isotropic central limit random field.
Their approach is based on a version of the saddle-point approximation, which is different from the Edgeworth-type expansion approach used in this study.
Our results are described in terms of the derivatives of $k$-point correlation functions,
and can be translated in terms of the higher-order spectra.
Another difference is that in \cite{chamandy-worsley-taylor-gosselin:2008} the threshold $x$ increases as the sample size $N$ increases, whereas $x$ of this study is assumed fixed.

The authors are preparing another cosmic paper \cite{matsubara-kuriki:2021},
where the perturbation formula for the Euler characteristic density up to $O(\nu^2)$ is derived using an alternative approach.
The final formulas are presented in terms of higher-order spectra.
It is confirmed that the formulas in the two studies are consistent.

This paper is organized as follows.
In Section \ref{sec:preliminary}, the Minkowski functional, and the Lipschitz-Killing curvature are defined, and (\ref{minkowski}) is proved.
The formula for the Euler characteristic density is then presented as the starting point of this study.
The main results are presented in Section \ref{sec:main}.
The isotropic $k$-point correlation functions are introduced, and then
the asymptotic expansion formula for the Euler characteristic density up to $O(N^{-1})$ is derived.
In addition, a class of the non-Gaussianity which cannot be captured by the Minkowski functional approach is identified.
In Section \ref{sec:example}, as an example, we consider an isotropic chi-square random field, which is a typical weakly-Gaussian random field when the degrees of freedom are large.
The isotropic chi-square random field belongs to a class of Gaussian related random fields whose Euler characteristic density is explicitly known.
We analytically and numerically confirmed the precision of the asymptotic expansion approximations.
Proofs of the main results are presented in Section \ref{sec:proofs}.
In the Appendix, the identities of the Hermite polynomial are proved (Section \ref{sec:Hermite}), and the regularity conditions for the asymptotic expansion are summarized (Section \ref{sec:asympt}).

\section{Preliminaries}
\label{sec:preliminary}

\subsection{Tube and Minkowski functional}
\label{sec:tube}

We begin with a quick review of the Minkowski functional.
Let $M$ be a bounded closed domain in $\R^n$.
For $u\in\R^n$, let $u_M$ be a point such that $\Vert u_M-u\Vert=\min_{w\in M}\Vert w-u\Vert$.
Note that $u_M$ exists but may not be unique.
A tube about $M$ with radius $r$ is defined by a set of points, the distance of which from $M$ is less than or equal to $r$:
\begin{equation}
\label{tube}
\Tube(M,r) = \bigl\{ u\in\R^n \mid \Vert u_M-u\Vert \le r \bigr\}.
\end{equation}
Then, the {\it critical radius} or {\it reach} is defined as
\[
 r_\mathrm{cri}(M) = \inf\bigl\{r\ge 0 \mid \mbox{$u_M$ is unique for all $u\in\Tube(M,r)$} \bigr\}.
\]
$r_\mathrm{cri}(M)$ is the maximum radius of the tube $\Tube(M,r)$ which does not have self-overlap (\cite[Section 2.2]{kuriki-takemura-taylor}).
$M$ is said to be of positive reach if $r_\mathrm{cri}(M)$ is strictly positive.
The classical Steiner formula states that when $M$ is a $C^2$-stratified manifold ($C^2$-piecewise smooth manifold \cite{takemura-kuriki:2002}) of positive reach, for all $r\in[0,r_\mathrm{cri}(M)]$, the volume of the tube (\ref{tube}) is expressed as a polynomial in $r$:
\begin{equation}
\label{steiner}
 \Vol_n(\Tube(M,r))
 = \sum_{j=0}^n \omega_{n-j}r^{n-j} \mathcal{L}_j(M)
 = \sum_{j=0}^n r^{j} \binom{n}{j} \mathcal{M}_j(M),
\end{equation}
where
$\Vol_n(\cdot)$ is the $n$-dimensional volume and
$\omega_j=\pi^{j/2}/\Gamma(j/2+1)$
is the volume of the unit ball in $\R^j$.
The Minkowski functional $\mathcal{M}_j(M)$ of $M$, and the Lipschitz-Killing curvature $\mathcal{L}_j(M)$ of $M$ are defined as the coefficients of the polynomial.
Note that $\mathcal{L}_j(M)$ is defined independently of the dimension $n$ of the ambient space.
There are variations of the definitions of the Minkowski functional.
The definition in (\ref{steiner}) is by \cite[Section 14.2]{schneider-weil:2008}.

Because of the Gauss-Bonnet theorem,
the Minkowski functional of the largest degree is proportional to the Euler characteristic of $M$:
\[
 \chi(M) = \mathcal{L}_0(M) = \mathcal{M}_n(M)/\omega_n.
\]

Throughout this study, it is assumed that the domain $T$ of the random field $X(t)$ is a $C^2$-stratified manifold of positive reach.
In the following, we prove (\ref{minkowski}).
Let $A(n,k)$ be the set of $k$-dimensional affine subspaces in $\R^n$.
Let $L\in A(n,n-k)$, and let $X|_L$ be the restriction of $X$ on $L$,
that is, a random field on $T\cap L$.
Because $X$ is isotropic, when $L$ is given, we have from (\ref{euler}),
\[
  \E[\chi((T\cap L)_x)]
 = \sum_{j=0}^{n-k} \mathcal{L}_j(T\cap L)\,\Xi_{j,N}(x).
\]
Note that $(T\cap L)_x=T_x\cap L$.
Let $\mu_{n,n-k}$ be the invariant measure over $A(n,n-k)$ normalized such that
$\mu_{n,k}(\{L\in A(n,k) \,|\, L\cap \B^n\ne\emptyset \}) = \omega_{n-k}$,
where $\B^n$ denotes the unit ball in $\R^n$.
Taking the integral $\int_{A(n,k)}\dd\mu_{n,n-k}(L)$,
by the generalized Crofton's theorem for a positive-reach set \cite{hug-schneider:2002}, we obtain
\[
 c_{n,0,k} \E[\mathcal{L}_{k}(T_x)]
 = \sum_{j=0}^{n-k} c_{n,j,k} \mathcal{L}_{k+j}(T)\,\Xi_{j,N}(x),
\]
where
$c_{n,j,k} = \bigl[\Gamma\bigl(\frac{k+1}{2}\bigr)\Gamma\bigl(\frac{k+j+1}{2}\bigr)\bigr]/\bigl[\Gamma\bigl(\frac{j+1}{2}\bigr)\Gamma\bigl(\frac{n+1}{2}\bigr)\bigr]$, which proves (\ref{minkowski}).

\subsection{Marginal distributions of $X$ and its derivatives}
\label{sec:marginal}

In this study, we deal with the central limit random field $X(t)=X_N(t)$ in (\ref{cl_field}) on $T\subset\R^n$.
We assume that $X(t)$ has a zero mean, unit variance, and a smooth sample path $t\mapsto X(t)$ in the following sense:
$X(t)$, $\nabla X(t)=(X_i(t))_{1\le i\le n}\in\R^n$, and $\nabla^2 X(t)=(X_{ij}(t))_{1\le i,j\le n}\in\Sym(n)$ (the set of $n\times n$ real symmetric matrices) exist and are continuous with respect to $t=(t^1,\ldots,t^n)$ a.s., where
\begin{equation}
\label{as_der} 
 X_i(t) = \frac{\partial}{\partial t^i}X(t), \qquad
 X_{ij}(t) = \frac{\partial^2}{\partial t^i\partial t^j}X(t).
\end{equation}
In addition, it is assumed that $X(\cdot)$ is isotropic.
That is, the arbitrary finite marginal distribution $\{X(t)\}_{t\in T'}$, where $T'\subset\R^n$ is a finite set, is invariant under the group of rigid motions of $t$.

This isotropic property implies that the marginal moment is independent of $t$.
Recall that we assumed $\E[X(t)]=0$ and $\E[X(t)^2]=1$.
The covariance function of an isotropic field is a function of the distance between two points:
\begin{equation}
\label{rho}
 \E[X(t_1)X(t_2)] = \E[Z_{(i)}(t_1) Z_{(i)}(t_2)] = \rho\bigl(\tfrac12\Vert t_1-t_2\Vert^2\bigr),
\end{equation}
where $Z_{(i)}$ was used in (\ref{cl_field}).
This covariance function is sufficiently smooth at $t_1=t_2$ by assuming the condition on $\rho$:
\begin{equation}
\label{rho_cond}
 \rho(0)=1,\ \rho'(0)<0, \mbox{ and } \dd^4 \rho(x)/\dd x^4 \mbox{ exists}.
\end{equation}
Under condition (\ref{rho_cond}),
$\partial^{4}\rho(\frac{1}{2}\Vert t_1-t_2\Vert^2)/
 \partial t_{1}^{i_1}\partial t_{1}^{j_1}
 \partial t_{2}^{i_2}\partial t_{2}^{j_2}$
exists, and % and continuous at $t_1=t_2$, and
there exist the mean square derivatives $X^*_i$ and $X^*_{ij}$ of $X$.
Their moments of order up to 2 are obtained by changing the derivatives and the expectation symbol $\E[\cdot]$
(\cite[Theorem 2.2.2]{adler:1981}).
For instance,
\begin{align}
\label{XX}
 \E[X^*_{ij}(t) X^*_{kl}(t)]
 = \frac{\partial^4}{\partial t_{1}^{i}\partial t_{1}^{j}\partial t_{2}^{k}\partial t_{2}^{l}} \E[X(t_1)X(t_2)] \Big|_{t_1=t_2=t}.
\end{align}
Moreover, (\ref{XX}) is equivalent to $\E[X_{ij}(t) X_{kl}(t)]$ when the a.s.\ derivatives (\ref{as_der}) exist.
In this manner, we obtain the moments of $(X(t),\nabla X(t),\nabla^2 X(t))$ of order up to 2 are obtained as follows:
$\E[X_i(t)] = \E[X_{ij}(t)] = \E[X_{i}(t)X(t)] = \E[X_{ij}(t)X_{k}(t)] = 0$,
$\E[X_i(t) X_j(t)]=-\rho'(0)\delta_{ij}$,
$\E[X_{ij}(t)X(t)]=\rho'(0)\delta_{ij}$, and
\[
 \E[X_{ij}(t)X_{kl}(t)]=\rho''(0) (\delta_{ik}\delta_{jl}+\delta_{il}\delta_{jk}+\delta_{ij}\delta_{kl}),
\]
where $\delta_{ij}$ is the Kronecker delta.
In particular, $\nabla X(t)$ is uncorrelated with $X(t)$ for each fixed $t$.
We change the variable from $\nabla^2 X(t)$ to
\begin{equation*}
 R(t) = (R_{ij}(t))_{1\le i,j\le n} = \nabla^2 X(t) + \gamma X(t) I_n,
 \quad\gamma=-\rho'(0).
\end{equation*}
Then, $R(t)$ is uncorrelated with $X(t)$ and $\nabla X(t)$ for each fixed $t$.
This simplifies the entire manipulation process.
$R(t)$ has a zero mean and a covariance structure
\begin{align}
\label{ERR}
\E[R_{ij}(t) R_{kl}(t)]
= \alpha \frac{1}{2} (\delta_{ik}\delta_{jl}+\delta_{il}\delta_{jk}) + \beta \delta_{ij}\delta_{kl},
\end{align}
where $\alpha = 2\rho''(0)$ and $\beta = \rho''(0)-\rho'(0)^2$.
As an $\binom{n+1}{2}\times\binom{n+1}{2}$ covariance matrix,
(\ref{ERR}) is nonnegative definite 
if and only if $\alpha\ge 0$ and $\alpha+n\beta\ge 0$, and is positive definite if and only if
\begin{equation}
\label{rho12}
 \alpha>0,\,\alpha+n\beta>0, \quad\mbox{or equivalently},\quad
 \rho''(0) > \frac{n}{n+2}\rho'(0)^2 > 0.
\end{equation}
In this study, we assume (\ref{rho12}).
This is satisfied when $(X(t),\nabla X(t),\nabla^2 X(t))\in\R^{1+n+\binom{n+1}{2}}$ has a probability density function.
This assumption is sufficient for our motivative applications in cosmology (\cite{matsubara:2003}).

In the limit $N\to\infty$, for each fixed $t$, $X(t)$, $\nabla X(t)$, and $R(t)$ are independently distributed as Gaussian distributions $X(t)\sim\mathcal{N}(0,1)$, $\nabla X(t)\sim \mathcal{N}_n(0,\gamma I_n)$, and $R(t)\in\Sym(n)$
is a zero-mean Gaussian random matrix with a covariance structure (\ref{ERR}).
This limiting Gaussian random matrix $R(t)$ is referred to as the Gaussian orthogonal invariant matrix (\cite{cheng-schwartzman:2018})
and has a probability density function
\begin{align*}
p^0_{R}(R)
\propto
\exp\Bigl\{-\frac{1}{2\alpha}\tr\bigl((R-\tfrac{1}{n}\tr(R) I_n)^2\bigr) - \frac{1}{2 n(\alpha+n\beta)}\tr(R)^2 \Bigr\}
\end{align*}
with respect to $\dd R=\prod_{1\le i\le j\le n} \dd R_{ij}$.

\cite{cheng-schwartzman:2018} proved that in the boundary case $\rho''(0)=(n/(n+2))\rho'(0)^2>0$, there exists an isotropic Gaussian random field.
For non-Gaussian random fields in the boundary case, nothing seems to be known.

\subsection{Euler characteristic density}

In the following, let $V(t)=(V_i(t))_{1\le i\le n} =\nabla X(t)$.
The probability density function of $(X(t),V(t),R(t))$ with respect to the Lebesgue measure $\dd X\dd V\dd R$, $\dd V=\prod_{i=1}^n \dd V_i$, $\dd R= \prod_{1\le i\le j\le n} \dd R_{ij}$ is denoted by $p_N(X,V,R)$ if exists.
This is irrespective of point $t$.

Morse's theorem is a fundamental tool for counting the Euler characteristics of a set.
The following is a result of the Kac-Rice formula, which is the integral form of Morse's theorem.
\begin{proposition}[\cite{adler-taylor:2007}]
\label{prop:kac-rice}
Suppose that the regularity conditions in Theorem 11.2.1 of \cite{adler-taylor:2007} with $f(t)$ and $g(t)$ replaced by $\nabla X(t)$ and $(X(t),\nabla^2 X(t))$, respectively,
are satisfied.
% \cite[Theorem 11.2.1]{adler-taylor:2007}. % (p.267) Euclidean, no boundary
% \cite[Theorem 12.1.1]{adler-taylor:2007}. % (p.302) Manifold with boundaries
% \cite[Theorem 4.1.1]{adler-taylor:2011}. % (p.61)
Then, the Euler characteristic density in (\ref{euler}) and (\ref{minkowski}) is
\begin{equation}
\label{Xin}
 \Xi_{n,N}(x) =
 \int_{x}^\infty \biggl[ \int_{\Sym(n)} \det\bigl(-R +\gamma x' I_n\bigr) p_N(x',0,R) \dd R \biggr] \dd x'.
\end{equation}
\end{proposition}

When the random field $X(\cdot)$ is Gaussian, the conditions for Proposition \ref{prop:kac-rice} are simplified (\cite[Corollary 11.2.2]{adler-taylor:2007}).
We use the formula (\ref{Xin}) as the starting point of the analysis.
In the limit $N\to\infty$, $p_{\infty}(x,0,R)=\phi(x)p^0_{V}(0)p^0_{R}(R)$, where
\begin{equation}
\label{gauss_pdf}
 \phi(x) = \frac{1}{\sqrt{2\pi}}e^{-x^2/2}
\end{equation}
is the probability density function of the standard Gaussian distribution $\mathcal{N}(0,1)$ and $p^0_{V}(0)=(2\pi\gamma)^{-n/2}$ is the probability density function of $V$ evaluated at $V=0$.
Then, we have the well-known result
\begin{equation}
\label{Xi_infty}
 \Xi_{n,\infty}(x)=\frac{\gamma^{n/2}}{(2\pi)^{n/2}}\phi(x) H_{n-1}(x),
\end{equation}
where
\begin{equation}
\label{hermite}
 H_k(x) = \phi(x)^{-1}
 \Bigl(-\frac{\dd}{\dd x}\Bigr)^k\phi(x)
\end{equation}
is the Hermite polynomial 
(e.g., \cite{tomita:1986},\cite{adler-taylor:2007}).

The primary purpose of this study is to derive the asymptotic expansion formula of $\Xi_{n,N}(x)$ around $N=\infty$.

\section{Main results}
\label{sec:main}

This section presents the main theorems of this study.
The statements are described in terms of the isotropic cumulants introduced below.

\subsection{Isotropic cumulants and their derivatives}
\label{sec:iso_cumulants}

If a function $f(t_1,\ldots,t_k)$, $t_i\in\R^n$, is isotropic, then $f$ is a function of $t_i$'s through the distances between them: $\Vert t_i-t_j\Vert$, $1\le i<j\le k$.
Then, the $k$-point correlation function of the isotropic central limit random field $X(\cdot)$ in (\ref{cl_field}) is written as
\begin{equation}
\label{cum}
 \cum(X(t_1),\ldots,X(t_k)) 
= N^{-\frac{1}{2}(k-2)} \kappa^{(k)}(x_{12},x_{13},\ldots,x_{k-1,k}),\quad x_{ab}=\tfrac12 \Vert t_a-t_b\Vert^2,
\end{equation}
where $\kappa^{(k)}$ denotes the $k$th order cumulant of $Z_{(i)}(\cdot)$ in (\ref{cl_field}).
Note that the cumulant of a random vector $(Y_1,\ldots,Y_k)$ is defined by
\[
 \cum(Y_1,\ldots,Y_k) = \sum (-1)^\ell (\ell-1)!\,
 \E\bigl[\textstyle{\prod_{i\in I_1}}Y_i\bigr]\cdots\E\bigl[\textstyle{\prod_{i\in I_\ell}}Y_i\bigr],
\]
where the summation runs over all possible set partitions of $\{1,\ldots,k\}$ such that $I_1\sqcup\cdots\sqcup I_\ell = \{1,\ldots,k\}$
(\cite{mccullagh:1987}).
When and only when $k$-dimensional marginal $(X(t_1),\ldots,X(t_k))$ has a $k$th order moment, the $k$-point correlation function exists.
The 2-point correlation function $\kappa^{(2)}(\cdot)$ is the covariance function $\rho(\cdot)$ in (\ref{rho}).
The 3- and 4-point correlation functions are as follows:
\begin{align*}
& \cum(X(t_1),X(t_2),X(t_3)) = \E[X(t_1)X(t_2)X(t_3)]
 = N^{-\frac{1}{2}} \kappa^{(3)}(x_{12},x_{13},x_{23}), \\
& \cum(X(t_1),X(t_2),X(t_3),X(t_4)) \\
&\qquad = \E[X(t_1)X(t_2)X(t_3)X(t_4)] -\E[X(t_1)X(t_2)]\E[X(t_3)X(t_4)][3] \\
&\qquad = N^{-1} \kappa^{(4)}(x_{12},x_{13},x_{14},x_{23},x_{24},x_{34}),
\end{align*}
where ``$[3]$'' represents the three symmetric terms.
Note that $\kappa^{(3)}(x_{12},x_{13},x_{23})$ is symmetric in its arguments, but $\kappa^{(k)}$ ($k\ge 4$) are not symmetric.

We assume the smoothness of $\kappa^{(k)}$ in (\ref{cum})
as a generalization of (\ref{rho_cond}): 
\begin{equation}
\label{cum_cond}
 \frac{\partial^{2k}\kappa^{(k)}(x_{12},x_{13},\ldots,x_{k-1,k})}
  {\prod_{1\le a<b\le k} (\partial x_{ab})^{n_{ab}}}\ \ %
 \mbox{exists for $\sum n_{ab}=2k$, $n_{ab}\le 4$},
\end{equation}
from which, it is proved that
\begin{equation}
\label{cum_cond0}
 \frac{\partial^{2k}\kappa^{(k)}(x_{12},x_{13},\ldots,x_{k-1,k})}
 {\partial t_{1}^{i_1}\partial t_{1}^{j_1}\cdots
 \partial t_{k}^{i_k}\partial t_{k}^{j_k}},\ \ x_{ab}=\tfrac12 \Vert t_a-t_b\Vert^2 \ \ \mbox{exists}.
\end{equation}
Under this condition, the $k$th order cumulants of $(X(t),\nabla X(t),\nabla^2 X(t))$ are obtained by changing the derivatives and symbol of the cumulant.
For instance,
\begin{equation}
\label{cum_example}
 \cum(X_{i}(t),X_{j}(t),X_{kl}(t))
 = \frac{\partial^4}{\partial t_1^i\partial t_2^j\partial t_3^k\partial t_3^l}
 \cum(X(t_1),X(t_2),X(t_3)) \Big|_{t_1=t_2=t_3=t}.
\end{equation}
(The proof is identical to that in Section \ref{sec:marginal}.
The mean square derivatives $X^*_i$ and $X^*_{ij}$ that exist under (\ref{rho_cond}) satisfy (\ref{cum_example}) under (\ref{cum_cond0}).)

To state the main theorems, the following notations are used:
\begin{equation}
\label{kappaN} 
\kappa^{(k)}_{(a_1 b_1),\ldots,(a_K b_K)}(0)
 = \kappa^{(k)}_E(0)
 = \Biggl(\prod_{(a,b)\in E}\Bigl(\frac{\partial}{\partial x_{ab}}\Bigr)\Biggr) \kappa^{(k)}\bigl((x_{ab})_{1\le a<b\le k}\bigr)\Big|_{x_{12}=\cdots=x_{k-1,k}=0}
\end{equation}
with
$E = \{(a_1,b_1),\ldots,(a_K,b_K)\}$.
Here, an undirected graph $(V,E)$ with vertex set $V=\{1,\ldots,k\}$ and edge set $E$ is considered.
Note that the edge set $E$ is a multiset that allows multiple edges connecting a pair of two vertices.
Then, a \textit{diagram} is defined as the undirected graph $(V,E)$ without the information of labeling vertices.
In addition, isolated vertices are omitted from the diagram.
That is,
two diagrams are identical if their undirected graphs are identical due to a suitable relabeling of the vertices.
As shown in Figure \ref{fig:loop+tree}, some diagrams contain cycles, while others do not.

\begin{figure}[ht]
\begin{center}
\medskip
\begin{tabular}{ccccccc}
 \scalebox{0.5}{\includegraphics{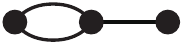}}
 &&
 \scalebox{0.5}{\includegraphics{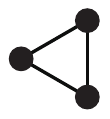}}
 &&
 \scalebox{0.5}{\includegraphics{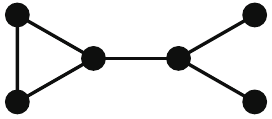}}
 &&
 \scalebox{0.5}{\includegraphics{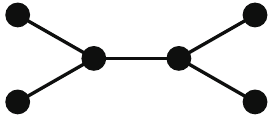}}
\\
(a) && (b) && (c) && (d)
\end{tabular}
\caption{Diagrams with cycle (a--c) and without cycle (d). \newline
\small
(a) $\kappa^{(3)}_{(12),(12),(13)}(0)$, \quad
(b) $\kappa^{(3)}_{(12),(13),(23)}(0)$, \newline
(c) $\kappa^{(6)}_{(12),(13),(14),(23),(45),(46)}(0)$, \quad
(d) $\kappa^{(6)}_{(12),(13),(14),(45),(46)}(0)$.
}
\label{fig:loop+tree}
\end{center}
\end{figure}

Table \ref{tab:derivatives} lists the cycle-free derivatives (\ref{kappaN}) for $k\le 4$ and their abbreviations.
We prepared this table for the statement of the main theorem in the next section.
The \textit{multiplicity} is the number of undirected graphs that are identical by way of relabeling the vertices.

\begin{table}[ht]
\caption{Derivatives of cumulant functions $\kappa^{(k)}$ with cycle-free diagram ($k=2,3,4$).}
\label{tab:derivatives}
\begin{center}
\begin{tabular}{ccccc}
\hline
abbreviation & \parbox[c][6ex][c]{12ex}{representative \\ derivative} & multiplicity & diagram \\ \hline
 $-\gamma$ & $\rho'(0)$ & 1 &
 $\rule{0pt}{14pt}$
 \scalebox{0.625}{\includegraphics{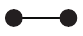}} \\
\hline
 $\ko$         & $\kappa^{(3)}(0)$ & 1 & \\
 $\ki$         & $\kappa^{(3)}_{(12)}(0)$ & 3 &
 \scalebox{0.625}{\includegraphics{k1_s.pdf}} \\
 $\kii$        & $\kappa^{(3)}_{(12),(13)}(0)$ & 3 &
 \scalebox{0.625}{\includegraphics{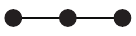}} \\
\hline
 $\kko$         & $\kappa^{(4)}(0)$ & 1 & \\
 $\kki$         & $\kappa^{(4)}_{(12)}(0)$ & 6 &
 \scalebox{0.625}{\includegraphics{k1_s.pdf}} \\
 $\kkiia$       & $\kappa^{(4)}_{(12),(13)}(0)$ & 12 &
 \scalebox{0.625}{\includegraphics{k11_s.pdf}} \\
 $\kkiiaa$      & $\kappa^{(4)}_{(12),(34)}(0)$ & 3 &
 \scalebox{0.625}{\includegraphics{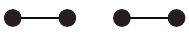}} \\
 $\kkiiid$     & $\kappa^{(4)}_{(12),(13),(14)}(0)$ & 4 &
 \scalebox{0.625}{\includegraphics{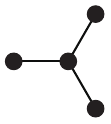}} \\
 $\kkiiia$     & $\kappa^{(4)}_{(12),(13),(24)}(0)$ & 12 &
 \scalebox{0.625}{\includegraphics{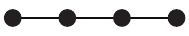}} \\
\hline
\end{tabular}
\end{center}
\end{table}

\subsection{Asymptotic expansion of the Euler characteristic density}
%\label{sec:expansion}

The two theorems are stated here as the main results.
These are proved in the following sections.

\begin{assumption}
\label{assumptions}
{\rm (i)}
The covariance function $\rho$ in (\ref{rho}) and the third and fourth order cumulant functions $\kappa^{(k)}$ in (\ref{cum}) for $k=3,4$ exist, and satisfy (\ref{rho_cond}), and (\ref{cum_cond}) or (\ref{cum_cond0}), respectively.

{\rm (ii)}
The probability density function $p_N(X,V,R)$ of $(X(t),V(t),R(t))$ for $t$ fixed exists for $N\ge 1$, and is bounded for some $N$.
$(X(t),V(t),R(t))$ has a moment of order $\binom{n+2}{2}+1$ $(\ge 4)$
under $p_1$.
\end{assumption}

\begin{theorem}
\label{thm:expansion}
Under Assumption \ref{assumptions},
as $N\to\infty$, the Euler characteristic density (\ref{Xin}) is expanded as
\begin{align*}
\Xi_{n,N}(x)
=& \frac{\gamma^{n/2}}{(2\pi)^{n/2}} \phi(x) \Bigl( H_{n-1}(x) + \frac{1}{\sqrt{N}} \Delta_{1,n}(x) + \frac{1}{N} \Delta_{2,n}(x) \Bigr) + o(N^{-1})
\end{align*}
uniformly in $x$, where
\begin{align*}
\Delta_{1,n}(x) =
&  \tfrac{1}{2} \gamma^{-2} \kii n (n-1) H_{n-2}(x)
 - \tfrac{1}{2} \gamma^{-1} \ki n H_{n}(x)
 + \tfrac{1}{6} \ko H_{n+2}(x),
\end{align*}
\begin{align*}
\Delta_{2,n}(x) =
& \Bigl( -\tfrac{1}{6}\gamma^{-3} (3\kkiiia + \kkiiid)
 +\tfrac{1}{8}\gamma^{-4} \kii^2 (n-7) \Bigr) n(n-1)(n-2) H_{n-3}(x) \nonumber \\
& + \Bigl( \tfrac{1}{8}\gamma^{-2} \bigl(\kkiiaa (n-2) +4\kkiia (n-1)\bigr) \nonumber \\
&\quad -\tfrac{1}{4}\gamma^{-3} \ki\kii (n-1)(n-4) \Bigr) n H_{n-1}(x) \nonumber \\
& + \Bigl( - \tfrac{1}{4}\gamma^{-1} \kki 
 +\tfrac{1}{24}\gamma^{-2} \bigl(3\ki^2 (n-2) + 2\ko\kii (n-1)\bigr) \Bigr) n H_{n+1}(x) \nonumber \\
& + \Bigl( \tfrac{1}{24} \ko -\tfrac{1}{12}\gamma^{-1} \ko\ki n \Bigr) H_{n+3}(x)
 +\tfrac{1}{72} \ko^2 H_{n+5}(x). \nonumber
\end{align*}
Here, the symbols
$\gamma,\ko,\ki,\kii,\kko,\kki,\kkiia,\kkiiaa,\kkiiid$ and $\kkiiia$ are defined in Table \ref{tab:derivatives}.
\end{theorem}

Note that, in principle, the asymptotic expansion of $\Xi_{n,N}(x)$ can be obtained up to an arbitrary order in $N$ by modifying Assumption \ref{assumptions}.
In $\Delta_{1,n}(x)$ and $\Delta_{2,n}(x)$, 
derivatives with cycles in their diagrams, such as Figures \ref{fig:loop+tree} (a) and (b), do not appear in the expansions.
Conversely, all types of derivatives of $\kappa^{(3)}$ and $\kappa^{(4)}$ with cycle-free diagrams listed in Table \ref{tab:derivatives} appear.
The first half of this observation holds true for an arbitrary order as follows.

\begin{theorem}
\label{thm:loop}
Suppose that the diagram of the derivative 
$\kappa^{(k)}_{(a_1 b_1),\ldots,(a_K b_K)}(0)$ in (\ref{kappaN})
has cycles of length greater than or equal to $2$.
Then, it does not appear in the asymptotic expansion of $\Xi_{n,N}(x)$ even if when expanded to $O\bigl(N^{-\frac{1}{2}(k-2)}\bigr)$.
\end{theorem}

We conjecture that the non-Gaussianity (that is, $\kappa^{(k)}$ and its derivatives) captured by the Minkowski functional approach is characterized by the presence or absence of cycles in the diagrams.

\section{Chi-square random field}
\label{sec:example}

Here we consider a chi-square random field defined by the squared sum of the independent copies of a Gaussian random field.

Let $Y(t)$  be a $C^2$-Gaussian random field
 on $T\subset\R^n$ with zero mean and covariance function $\E[Y(s) Y(t)] = \rhoY(\frac{1}{2}\Vert s-t\Vert^2)$ such that $\rhoY(0)=1$, $\rhoY'(0)=-g<0$,
$\rhoY''(0)>(n/(n+2))\rhoY'(0)^2$, and $\dd^4\rhoY(x)/\dd x^4$ exists.
For example, $\rhoY(x)=e^{-g x}$, $g>0$.
Then, define
\begin{equation*}
 X(t) = X_N(t) = \frac{1}{\sqrt{2N}} \sum_{i=1}^N \bigl(Y_{(i)}(t)^2-1\bigr),
\end{equation*}
where $Y_{(i)}(\cdot)$ are i.i.d.\ copies of $Y(\cdot)$.

The chi-square random field has been well studied as one of the simplest non-Gaussian random fields (\cite[Section 7.1]{adler:1981}, \cite{worsley:1994}, \cite{matsubara-yokoyama:1996}, \cite{taylor:2006}).
Here, we verify that Assumption \ref{assumptions} is satisfied.

According to \cite[Lemma 3.2]{worsley:1994}, $\nabla X=\nabla X(t)$ and $\nabla^2 X=\nabla^2 X(t)$ can be decomposed as
\begin{equation}
\label{decompositon}
\nabla X = 2 g^{\frac{1}{2}} X^{\frac{1}{2}} U, \qquad
\nabla^2 X = 2 g(P + U U^\top - X I_n - X^{\frac{1}{2}}R),
\end{equation}
where 
$X=X(t)\sim\chi^2_N$, $U\sim\mathcal{N}_n(0,I_n)$, $P\sim\mathcal{W}_{n\times n}(N-1,I_n)$, and $R\in\Sym(n)$ a Gaussian orthogonal invariant matrix with parameters $\alpha=2\rhoY''(0)/g^2$ and $\beta=\rhoY''(0)/g^2-1$ in (\ref{ERR}),
are independently distributed.
From (\ref{decompositon}), $(X,\nabla X,\nabla^2 X)$ has moments of arbitrary order.
In addition, the conditional probability density of $(X,\nabla X,\nabla^2 X)$ given $R$ is obtained as
\begin{align*}
 p(X,\nabla X,\nabla^2 X|R) \propto
 & \det(P)^{\frac{1}{2}(N-n-2)}
 e^{-\frac{1}{2}\tr(P)} e^{-\frac{1}{8 g X}\Vert\nabla X\Vert^2} X^{\frac{1}{2}(N-n)-1}e^{-X/2} \1(P\succ 0),
\end{align*}
where
\[
 P = (2 g)^{-1}\nabla^2 X - (4 g)^{-1}X^{-1}(\nabla X)(\nabla X)^\top + X I_n + X^{\frac{1}{2}}R,
\]
and $P\succ 0$ indicates that $P$ is positive definite.
This conditional density is continuous and bounded above when $N$ is large.
Hence, so is the unconditional density $p(X,\nabla X,\nabla^2 X)=\E^R[p(X,\nabla X,\nabla^2 X|R)]$, and Assumption \ref{assumptions} (ii) is satisfied.

The $k$-point correlation function $\kappa^{(k)}$ is the same as the cumulant of $Z(t)=(Y(t)^2-1)/\sqrt{2}$ as follows.
Let $x_{ab}=\frac{1}{2}\Vert t_a-t_b\Vert^2$.
\begin{align*}
 & \rho(x_{12}) = \kappa^{(2)}(x_{12}) = \cum(Z(t_1),Z(t_2))
 = \rhoY(x_{12})^2, \\
 & \kappa^{(3)}(x_{12},x_{13},x_{23}) = \cum(Z(t_1),Z(t_2),Z(t_3))
  = 2^{3/2}\rhoY(x_{12})\rhoY(x_{13})\rhoY(x_{23}),
\end{align*}
and
\begin{align*}
 & \kappa^{(4)}(x_{12},x_{13},\ldots,x_{34}) =
 \cum(Z(t_1),Z(t_2),Z(t_3),Z(t_4)) \\
 & = 4\bigl[\rhoY(x_{13})\rhoY(x_{14})\rhoY(x_{23})\rhoY(x_{24})
 + \rhoY(x_{12})\rhoY(x_{14})\rhoY(x_{23})\rhoY(x_{34}) \\
 &\quad + \rhoY(x_{12})\rhoY(x_{13})\rhoY(x_{24})\rhoY(x_{34})\bigr].
\end{align*}
We see that $\kappa^{(k)}$, $k=2,3,4$, defined above satisfy the requirements (\ref{rho_cond}) and (\ref{cum_cond}).
Hence, Assumption \ref{assumptions} (i) is satisfied.

The cumulants listed in Table \ref{tab:derivatives} are as follows:
\begin{align*}
& \rho(0) = 1, \quad 
-\gamma = \rho'(0) = -2 g, \quad
\ko = 2\sqrt{2}, \quad
\ki = -\sqrt{2}\gamma, \quad
\kii = \gamma^2/\sqrt{2}, \\
& \kko = 12, \quad
\kki = -4\gamma, \quad
\kkiia = \gamma^2, \quad
\kkiiaa = 2\gamma^2, \quad
\kkiiid = 0, \quad
\kkiiia = -\gamma^3/2.
\end{align*}
$\Delta_{1,n}(x)$ and $\Delta_{2,n}(x)$ in Theorem \ref{thm:expansion} are
\begin{equation}
\label{Delta12}
\begin{aligned}
\Delta_{1,n}(x) =&  
\sqrt{2}\bigl(\tfrac{1}{4}n(n-1) H_{n-2}(x) +\tfrac{1}{2}n H_{n}(x) +\tfrac{1}{3}H_{n+2}(x)\bigr), \\
\Delta_{2,n}(x) =&
 \tfrac{1}{16}n(n-1)(n-2)(n-3) H_{n-3}(x) +\tfrac{1}{4} n^2(n-2) H_{n-1}(x) \\
& +\tfrac{1}{12}n(5n+4) H_{n+1}(x) +\tfrac{1}{6}(2n+3) H_{n+3}(x) +\tfrac{1}{9}H_{n+5}(x).
\end{aligned}
\end{equation}

The exact formula for the isotropic chi-square random field was obtained by \cite[Theorem 3.5]{worsley:1994} as follows:
\begin{equation}
\label{Xin_chi}
\Xi_{n,N}(x) = \frac{g^{n/2}}{(2\pi)^{n/2}} \frac{1}{2^{N/2-1}\Gamma(N/2)} H_{n-1}^{(N-1)}(\sqrt{y}) e^{-y/2}, \quad y=N+x\sqrt{2N},
\end{equation}
where
\begin{equation*}
 H^{(N)}_n(x) = e^{x^2/2} \Bigl(-\frac{\dd}{\dd x}\Bigr)^n \bigl(x^N e^{-x^2/2}\bigr).
\end{equation*}
This formula was also obtained as a special case of the Euler characteristic density of the Gaussian related random fields (\cite{adler-taylor:2007,adler-taylor:2011},\cite{panigrahi-etal:2019}).

Through direct calculations using a generating function (not using Theorem \ref{thm:expansion}), the following proposition can be shown.
The proof is provided at the end of this section.
\begin{proposition}
\label{prop:generating_fn}
$\Xi_{n,N}(x)$ in (\ref{Xin_chi}) is expanded as
\begin{equation*}
\Xi_{n,N}(x) = \frac{\gamma^{n/2}}{(2\pi)^{n/2}} \phi(x) \Bigl( H_{n-1}(x) + \frac{1}{\sqrt{N}} \Delta_{1,n}(x) + \frac{1}{N} \Delta_{2,n}(x) \Bigr) + o(N^{-1})
\end{equation*}
as $N\to\infty$, 
where $\Delta_{1,n}(x)$ and $\Delta_{2,n}(x)$ are defined in (\ref{Delta12}). 
\end{proposition}

\begin{figure}[h]
\begin{center}
\scalebox{0.7}{\includegraphics{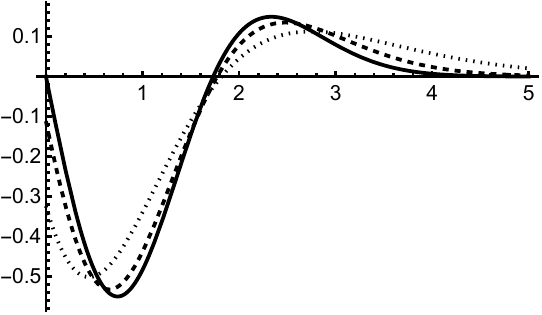}}
\caption{EC density for a chi-square random field on $\R^4$ of $N$ degrees of freedom.
(dotted: $N=10$, dashed: $N=100$, solid: $N=\infty$.)}
\label{fig:n4-m10_100_infty}
\end{center}
\end{figure}

Figure \ref{fig:n4-m10_100_infty} shows the Euler characteristic densities of the chi-square random field on $\R^4$ when the degrees of freedom $N$ is 10, 100, and $\infty$.
The curve converges to the limiting Gaussian curve as $N$ increases.
 
\begin{figure}[h]
\begin{center}
\scalebox{0.7}{\includegraphics{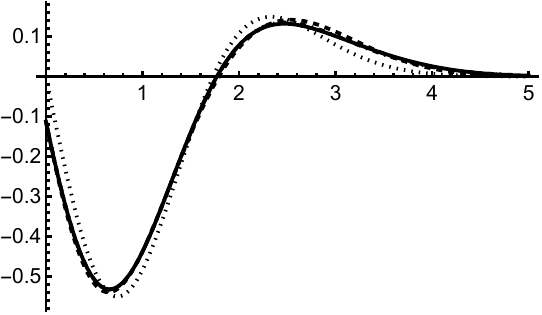}}
\caption{EC density for a chi-square random field on $\R^4$ of $100$ degrees of freedom and its approximations. \newline
 (dot-dashed: true curve, dotted: Gaussian approx., dashed: 1st approx., solid: 2nd approx.)}
\label{fig:n4-m100}
\end{center}
\end{figure}

Figure \ref{fig:n4-m100} shows the Euler characteristic density $\Xi_{n,N}(x)$ ($n=4$) of the chi-square random field and its Gaussian approximation: $(2\pi)^{-n/2}\phi(x) H_{n-1}(x)$, the 1st approximation: $(2\pi)^{-n/2}\phi(x)\bigl( H_{n-1}(x) + \Delta_{1,n}(x)/\sqrt{N} \bigr)$, and the 2nd approximation:
$(2\pi)^{-n/2} \phi(x) \bigl( H_{n-1}(x) + \Delta_{1,n}(x)/\sqrt{N} + \Delta_{2,n}(x)/N \bigr)$.

\begin{figure}[h]
\begin{center}
\scalebox{0.7}{\includegraphics{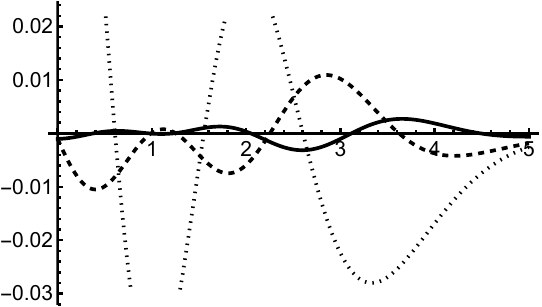}}
\caption{Approximation error of EC density for a chi-square random field on $\R^4$ of $100$ degrees of freedom. \newline
 (dotted: Gaussian approx., dashed: 1st approx., solid: 2nd approx.)}
\label{fig:n4-m100-diff}
\end{center}
\end{figure}

In Figure \ref{fig:n4-m100}, the four curves are too close to distinguish.
Figure \ref{fig:n4-m100-diff} shows the difference between the three approximations with respect to the true curve $\Xi_{n,N}(x)$.
We see that the 1st approximation is more accurate than the Gaussian approximation, and that the 2nd approximation is more accurate than the 1st approximation, as expected.

\begin{proof}[Proof of Proposition \ref{prop:generating_fn}]
We prove that
\begin{equation}
\label{der_chi_density}
 \frac{2^{-n/2}}{2^{N/2-1}\Gamma(N/2)} H_{n-1}^{(N-1)}(\sqrt{y}) e^{-y/2}, \quad y=N+x\sqrt{2N},
\end{equation}
is expanded as
$\phi(x) \bigl( H_{n-1}(x) + \Delta_{1,n}(x)/\sqrt{N} + \Delta_{2,n}(x)/N \bigr) + o(N^{-1})$,
where $\Delta_{1,n}(x)$ and $\Delta_{2,n}(x)$ are given in (\ref{Delta12}). 

By multiplying (\ref{der_chi_density}) by $z^{n-1}/(n-1)!$,
and taking the summation over $n\ge 1$, 
the generating function of (\ref{der_chi_density}) is obtained as
\begin{equation}
\label{gf}
\frac{1}{2^{(N-1)/2}\Gamma(N/2)}(-z/\sqrt{2}+\sqrt{y})^{N-1} e^{-\frac{1}{2}(-z/\sqrt{2}+\sqrt{y})^2}, \quad y=N+x\sqrt{2N}.
\end{equation}
The generating function (\ref{gf}) is expanded around $N=\infty$ as
\[
 \varphi_0(z,x)\bigl(1+p_1(z,x)/\sqrt{N}+p_2(z,x)/N \bigr) + o(N^{-1}),
\]
where
\[
 \varphi_0(z,x)=e^{-\frac{1}{2} (x-z)^2}/\sqrt{2\pi},
\]
and
\begin{align*}
 p_1(z,x) =& \bigl( \tfrac{1}{3}x(2x^2-3) -(x-1)(x+1)z +\tfrac{1}{2}x z^2 -\tfrac{1}{6}z^3 \bigr)/\sqrt{2}, \nonumber \\
 p_2(z,x) =&
 \tfrac{1}{36} (4x^6-30x^4+27x^2-6) -\tfrac{1}{12} (x-2)x(x+2) (4x^2-3) z \\
& +\tfrac{1}{12} (5x^4-15x^2+6) z^2
-\tfrac{1}{36} x(11x^2-21) z^3 \\
& +\tfrac{7}{48} (x-1)(x+1) z^4
-\tfrac{1}{24} x z^5
+\tfrac{1}{144} z^6.
\end{align*}
We select the coefficient of the term $z^{n-1}/(n-1)!$.
Because $e^{z x-z^2/2}$ is the generating function of the Hermite polynomial,
the coefficient of the term $z^{n-1}/(n-1)!$ in $\varphi(z,x) z^a$ is
\[
 \phi(x)H_{n-a-1}(x) (n-1)_a, \quad (n-1)_a = (n-1)(n-2)\cdots (n-a), 
\]
where $\phi(x)=\varphi(0,x)$ denotes the probability density function of $\mathcal{N}(0,1)$.
Based on this term-rewriting rule,
we obtain the two terms $\Delta_{1,n}(x)$ and $\Delta_{2,n}(x)$ in terms of the polynomials in $x$ and the Hermite polynomials in $x$.
By applying the three-term relation
\begin{equation}
\label{three-term}
 x H_{k}(x) = H_{k+1}(x) + k H_{k-1}(x),
\end{equation}
we have the expressions for $\Delta_{1,n}(x)$ and $\Delta_{2,n}(x)$ in terms of the Hermite polynomials only.
\end{proof}

\section{Proofs of the main results}
\label{sec:proofs}

In this section, we prove the main theorems.
The outline is as follows.
We first describe the characteristic function of $(X(t),\nabla X(t),\nabla^2 X(t))$ using the 3- and 4-point correlation functions of $Z_{(i)}(\cdot)$ in (\ref{cl_field})  (Section \ref{sec:iso_cgf}).
The resulting characteristic function is modified into the conditional characteristic function of $(X(t),\nabla^2 X(t))$ when $\nabla X(t)=0$ is given.
Then, the integral (\ref{Xin}) is obtained by taking derivatives of the conditional characteristic function (Section \ref{sec:proof_expansion}).
The validity of the asymptotic expansion is proved separately. 
Theorem \ref{thm:loop} is proved in Section \ref{sec:undetectable}.

\subsection{Isotropic cumulant generating function}
\label{sec:iso_cgf}

The objective function $\Xi_{n,N}(x)$ is an expectation with respect the distribution of $(X,V,R)$, $V=\nabla X$, $R=\nabla^2 X+\gamma X I_n$.
The index $t$ is supposed to be fixed and omitted. 
To evaluate this, we first identify the characteristic function of $(X,\nabla X,\nabla^2 X)$.
We introduce parameters
$T = (\tau_i)\in\R^n$ and $\Theta = (\theta_{ij})\in\Sym(n)$ such that
\begin{equation}
\label{Theta}
 \theta_{ij} = \frac{1+\delta_{ij}}{2}\tau_{ij} \quad (i\le j).
\end{equation}
The characteristic function of the $1+n+n(n+1)/2=\binom{n+2}{2}$ dimensional random variables $(X,\nabla X,\nabla^2 X)$ is
\begin{align}
\label{mgf0}
\psi_N(s,T,\Theta)
 =& \E\Bigl[ e^{\iii (s X+\sum_i \tau_i X_i+\sum_{i\le j} \tau_{ij}X_{ij})} \Bigr]
 = \E\Bigl[ e^{\iii(s X+\langle T,\nabla X\rangle+\tr(\Theta \nabla^2 X))} \Bigr].
\end{align}

When $(X,\nabla X,\nabla^2 X)$ has the $k_1$th moments,
the cumulant generating function $\log\psi_N(s,T,\Theta)$ has the following Taylor series
\[
 \log\psi_N(s,T,\Theta) = \sum_{k=2}^{k_1} \sum_{u+v+w=k}\frac{N^{-\frac{1}{2}(k-2)} \iii^k}{u!\,v!\,w!} K_{u,v,w}(s,T,\Theta) + o\bigl(N^{-\frac{1}{2}(k_1-2)}\bigr),
\]
where
\begin{equation}
\label{Kuvw}
\begin{aligned}
& N^{-\frac{1}{2}(k-2)} K_{u,v,w} (s,T,\Theta) \\
& = \cum\bigl(\underbrace{s X,\ldots,s X}_u,\underbrace{\langle T,\nabla X\rangle,\ldots,\langle T,\nabla X\rangle}_v,\underbrace{\tr(\Theta\nabla^2 X),\ldots,\tr(\Theta\nabla^2 X)}_w\bigr) \\
& = N^{-\frac{1}{2}(k-2)} s^u \Biggl(\prod_{b=u+1}^{u+v} \langle T,\nabla_{t_b}\rangle \prod_{c=u+v+1}^{k} \tr\bigl(\Theta\nabla^2_{t_c}\bigr)\Biggr) \kappa^{(k)}\bigl((\tfrac{1}{2}\Vert t_a-t_b\Vert^2)_{a<b}\bigr)\Big|_{t_1=\cdots=t_{k}}
\end{aligned}
\end{equation}
with $\nabla_{t_b}=(\partial/\partial t_{b}^{i})_{1\le i\le n}$, $\nabla^2_{t_c}=(\partial^2/\partial t_{c}^{i}\partial t_{c}^{j})_{1\le i,j\le n}$. 

For instance, 
\begin{align*}
 N^{-\frac{1}{2}} K_{0,2,1}(s,T,\Theta)
=& \cum(\langle T,\nabla X\rangle,\langle T,\nabla X\rangle,\tr(\Theta \nabla^2 X)) \\
=& \sum_{i,j,k,l=1}^n \tau_i \tau_j \theta_{kl} \cum(X_i,X_j,X_{kl}),
\end{align*}
and, as shown in (\ref{cum_example}),
\begin{align*}
\cum(X_i,X_j,X_{kl})
&= \frac{\partial^4
N^{-\frac{1}{2}} \kappa^{(3)}\bigl(\tfrac12\Vert t_1-t_2\Vert^2,\tfrac12\Vert t_1-t_3\Vert^2,\tfrac12\Vert t_2-t_3\Vert^2\bigr)}{\partial t_1^i\partial t_2^j\partial t_3^k\partial t_3^l} \Big|_{t_1=t_2=t_3} \\
&= N^{-\frac{1}{2}} \bigl(-2\kappa_{11} \delta_{ij}\delta_{kl} + \kappa_{11} (\delta_{ik}\delta_{jl}+\delta_{il}\delta_{jk})\bigr),
\end{align*}
where we let
$\kappa_{11} = \partial^2\kappa^{(3)}(x_{12},x_{13},x_{23})/\partial x_{12}\partial x_{13}|_{x_{12}=x_{13}=x_{23}=0}$.
Thus, we obtain
\[
 K_{0,2,1}(s,T,\Theta)
= -2\kappa_{11}\Vert T\Vert^2\tr(\Theta) + 2 \kappa_{11} T^\top\Theta T.
\]

In this example, the factors $\Vert T\Vert^2\tr(\Theta)$ and $T^\top\Theta T$ are invariant under the transformation $(T,\Theta)\mapsto (P T,P\Theta P^\top)$, $P\in O(n)$.
This results from the isotropic property
$(X,\nabla X,\nabla^2 X)\mathop{=}^d (X,P\nabla X,\allowbreak P\nabla^2 X P^\top)$.
The general form of $K_{u,v,w}(s,T,\Theta)$ is specified by the isotropic assumption.

\begin{lemma}
\label{lem:invariant_poly}
{\rm (i)} $K_{u,2v,w}(s,T,\Theta)$ is a linear combination of
\begin{equation}
\label{K}
 s^u \prod_{j\ge 0}(T^\top \Theta^j T)^{v_j} \prod_{k\ge 1}\tr(\Theta^k)^{w_k},
\end{equation}
where $(v_j)_{j\ge 0}$ and $(w_k)_{k>0}$ are non-negative integers satisfying
\begin{equation*}
 \sum_{j\ge 0}v_j = v, \quad \sum_{j\ge 1} j v_j + \sum_{k\ge 1} k w_k = w.
\end{equation*}
{\rm (ii)} $K_{u,2v+1,w}(s,T,\Theta)=0$.
\end{lemma}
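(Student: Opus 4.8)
The plan is to use the isotropy of the field to show that $K_{u,v,w}$ is an $O(n)$-invariant polynomial in $(T,\Theta)$, and then to read off its form from the classical description of the orthogonal invariants of one vector together with one symmetric matrix. First I would observe, directly from (\ref{Kuvw}), that $K_{u,v,w}(s,T,\Theta)=s^u\,\widetilde K_{v,w}(T,\Theta)$, where $\widetilde K_{v,w}$ is a polynomial that is homogeneous of degree $v$ in $T$ and of degree $w$ in $\Theta$: the operator $\prod_b\langle T,\nabla_{t_b}\rangle\prod_c\tr(\Theta\nabla^2_{t_c})$ is linear in $T$ in each of its $v$ gradient factors and linear in $\Theta$ in each of its $w$ Hessian factors, and after evaluation at the diagonal $t_1=\cdots=t_N$ the surviving derivatives of $\kappa^{(N)}$ are constants.

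Next I would prove the invariance $\widetilde K_{v,w}(T,\Theta)=\widetilde K_{v,w}(PT,P\Theta P^\top)$ for every $P\in O(n)$. Since $\kappa^{(N)}$ depends on $t_1,\dots,t_N$ only through the pairwise distances, the function $F(t_1,\dots,t_N)=\kappa^{(N)}((\tfrac12\|t_a-t_b\|^2)_{a<b})$ satisfies $F(Pt_1,\dots,Pt_N)=F(t_1,\dots,t_N)$ for all $P\in O(n)$. Applying the chain rule to $F(Pt_\bullet)$, the operator $\langle T,\nabla_{t_b}\rangle$ is carried to $\langle PT,\nabla_{t_b}\rangle$ and $\tr(\Theta\nabla^2_{t_c})$ to $\tr(P\Theta P^\top\nabla^2_{t_c})$, so acting with the full operator on the identity $F(Pt_\bullet)=F(t_\bullet)$ and evaluating on the rotation-stable diagonal yields the claimed equality. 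In particular $P=-I_n$ gives $\widetilde K_{v,w}(T,\Theta)=\widetilde K_{v,w}(-T,\Theta)$, so $\widetilde K_{v,w}$ is even in $T$ and vanishes unless its $T$-degree is even; this is why the statement concerns the even case, with the $T$-degree written as $2v$.

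Finally I would classify the invariants. Being even in $T$, $\widetilde K_{2v,w}$ is a polynomial in the entries of the rank-one matrix $A:=TT^\top$ and of $\Theta$ that is invariant under the simultaneous conjugation $(A,\Theta)\mapsto(PAP^\top,P\Theta P^\top)$. By the first fundamental theorem of invariant theory for the conjugation action of $O(n)$, such invariants are generated by traces of words in $A$ and $\Theta$; and since $A$ has rank one, the collapse identity $A\,\Theta^a A=(T^\top\Theta^a T)\,A$ reduces every such trace to a product of the scalars $\tr(\Theta^k)$ and $\tr(\Theta^j A)=T^\top\Theta^j T$. Thus $\widetilde K_{2v,w}$ is a linear combination of monomials $\prod_{j\ge0}(T^\top\Theta^j T)^{v_j}\prod_{k\ge1}\tr(\Theta^k)^{w_k}$, and matching bidegrees closes the argument: $T^\top\Theta^j T$ has $T$-degree $2$ and $\Theta$-degree $j$ while $\tr(\Theta^k)$ has $T$-degree $0$ and $\Theta$-degree $k$, so equating the $T$-degree to $2v$ forces $\sum_{j\ge0}v_j=v$ and equating the $\Theta$-degree to $w$ forces $\sum_{j\ge1}jv_j+\sum_{k\ge1}kw_k=w$. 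Restoring the factor $s^u$ gives (\ref{K}).

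I expect the invariant-theory input to be the main obstacle: one must be sure that for a single vector and a single symmetric matrix no generators beyond $\{\tr(\Theta^k)\}_{k\ge1}$ and $\{T^\top\Theta^j T\}_{j\ge0}$ are needed, and in particular that $T$ enters only quadratically. The rank-one reduction above makes this transparent, but a self-contained alternative is to diagonalize $\Theta=U\,\mathrm{diag}(\lambda_1,\dots,\lambda_n)\,U^\top$, reduce an invariant to a symmetric function of the eigenpairs $(\lambda_i,(U^\top T)_i^2)$, and recognize the resulting multisymmetric polynomials as generated by $\sum_i\lambda_i^k=\tr(\Theta^k)$ and $\sum_i(U^\top T)_i^2\lambda_i^j=T^\top\Theta^j T$.
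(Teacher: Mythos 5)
Your proposal is correct and follows essentially the same route as the paper: establish $O(n)$-invariance of $K_{u,v,w}(1,T,\Theta)$ from isotropy, use $P=-I_n$ to force evenness in $T$, view the result as an invariant polynomial in the pair $(TT^\top,\Theta)$, and classify such invariants as products of $\tr(\Theta^k)$ and $T^\top\Theta^j T$. The only difference is that the paper delegates the classification step to Davis's theory of invariant polynomials of two symmetric matrices (his Eq.\ (4.8)), whereas you make it self-contained via the first fundamental theorem plus the rank-one collapse $A\Theta^aA=(T^\top\Theta^aT)A$, and you additionally spell out the bidegree bookkeeping that yields the constraints on $v_j$ and $w_k$.
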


\begin{proof}[Proof of Lemma \ref{lem:invariant_poly}]
Because of the isotropic property, and the multilinearity of the cumulant,
we have
\[
 K_{u,v,w}(s,T,\Theta)=s^u K_{u,v,w}(1,T,\Theta)=s^u K_{u,v,w}(1,P^\top T,P^\top \Theta P), \ \ P\in O(n).
\]
By letting $P=-I_n$, we can observe that $K_{u,v,w}(1,T,\Theta)=0$ when $v$ is odd.
When $v$ is even, $K_{u,v,w}(1,T,\Theta)$ is an even polynomial in $\tau_i$, and is a function of $(\tau_i \tau_j)_{1\le i,j\le n}= T T^\top\in\Sym(n)$ as well.

As an extension of the zonal polynomial in multivariate analysis,
\cite{davis:1980} introduced an invariant polynomial of two symmetric matrices $A,B\in\Sym(n)$ which is invariant under the transformation $(A,B)\mapsto (P^\top A P,\allowbreak P^\top B P)$, $P\in O(n)$.

$K_{u,v,w}(1,T,\Theta)$ is an invariant polynomial in $(T T^\top,\Theta)$, which is a linear combination of (\ref{K}) as shown in (4.8) in \cite{davis:1980}.
\end{proof}

Next, the expression of $K_{u,v,w}$'s in (\ref{Kuvw}) is obtained.
The abbreviations
($\gamma,\ko,\allowbreak \ki,\kii,\kko,\kki,\kkiia,\kkiiaa,\kkiiid,\kkiiia$)
are defined in Table \ref{tab:derivatives}.
Because of Theorem \ref{thm:loop} (to be proved in Section \ref{sec:undetectable}),
the derivatives that have a cycle of the length greater than or equal to $2$
in their diagram do not contribute to the final results,
and are denoted by the symbol ``$\cyc$'' (the explicit form is not required).

$K_{u,2v,w}$ for $k=u+2v+w=2,3,4$ are displayed below.

\subsubsection*{The second order cumulants:}
\begin{align*}
%\cum(s X,s X)
& K_{2,0,0} = s^2, \qquad
K_{0,2,0}
 = \gamma \Vert T\Vert^2, \quad
K_{1,0,1} = -\gamma \tr(\Theta), \\
& K_{0,0,2} = \rho''(0)[2\tr(\Theta^2) + \tr(\Theta)^2] = \cyc.
\end{align*}

\subsubsection*{The third order cumulants:}
\begin{align*}
%\cum(s X,s X,s X)
 K_{3,0,0}
 =& \kappa_0 s^3, \quad
K_{1,2,0}
 = -\kappa_1 s \Vert T\Vert^2, \quad
K_{2,0,1}
= 2\kappa_1 s^2 \tr(\Theta), \\
K_{1,0,2}
 =& 3\kappa_{11} s \tr(\Theta)^2 + \cyc, \quad
K_{0,2,1}
 = -2\kappa_{11} \Vert T\Vert^2 \tr(\Theta)
 + 2\kappa_{11} T^\top\Theta T, \\
K_{0,0,3}
 =& \cyc.
\end{align*}

\subsubsection*{The fourth order cumulants:}
\begin{align*}
%
%\cum(s X,s X,s X,s X)
K_{4,0,0}
 =& \kk_0 s^4, \quad
K_{2,2,0}
 = -\kk_1 s^2 \Vert T\Vert^2, \quad
K_{0,4,0}
 = 3 \kk_{11}^{aa} \Vert T\Vert^4, \\
K_{3,0,1}
 =& 3\kk_1 s^3 \tr(\Theta), \quad
K_{2,0,2}
 = (6\kk_{11}^a+2\kk_{11}^{aa}) s^2 \tr(\Theta)^2 + \cyc, \\
K_{1,2,1}
 =& -(2\kk_{11}^a +\kk_{11}^{aa}) s \Vert T\Vert^2 \tr(\Theta) + 2\kk_{11}^a s T^\top\Theta T, \\
K_{1,0,3}
 =& (12\kk_{111}^a+4\kk_{111}^d) s \tr(\Theta)^3 + \cyc, \\
K_{0,2,2}
 =& -(6\kk_{111}^a +2\kk_{111}^d) \Vert T\Vert^2 \tr(\Theta)^2 + (8\kk_{111}^a +4\kk_{111}^d) T^\top\Theta T \tr(\Theta) \\ & -8\kk_{111}^a T^\top\Theta^2 T + \cyc, \quad
K_{0,0,4}
 = \cyc.
\end{align*}

The term ``$\cyc$'' can be set to be zero in the following calculations.

\subsection{Proof of Theorem \ref{thm:expansion}}
\label{sec:proof_expansion}

In the previous section, the Taylor series of the cumulant generating function $\log\psi_N(s,T,\Theta)$ in (\ref{mgf0}) was obtained up to $O(N^{-1})$.
This is rewritten as the cumulant generating function of $(X,V,R)$, $V=\nabla X$, and $R=\nabla^2 X+\gamma X I_n$.

The distribution of $(X,V,R)$ when $N\to\infty$ is presented in Section \ref{sec:marginal}.
The characteristic functions of $X$, $V$ and $R$ when $N\to\infty$ are
\begin{equation*}
 \psi^0_{X}(s) = e^{-\frac{1}{2}s^2}, \qquad
 \psi^0_{V}(T) = e^{-\frac{1}{2}\gamma \Vert T\Vert^2}, \qquad
 \psi^0_{R}(\Theta) = e^{-\frac{1}{2}\alpha\tr(\Theta^2) -\frac{1}{2}\beta\tr(\Theta)^2},
\end{equation*}
where $\gamma=-\rho'(0)$, $\alpha = 2\rho''(0)$, and $\beta = \rho''(0)-\rho'(0)^2$.

From this definition, the characteristic function of $(X,V,R)$ is
\begin{equation}
\label{mgf}
 \E\Bigl[ e^{\iii(s X+\langle T,V\rangle+\tr(\Theta R))}\Bigr]
% = \psi_N(s+\gamma\tr(\Theta),T,\Theta) 
 = \psi_N(\s,T,\Theta), \quad \s=\s(s,\Theta)=s+\gamma\tr(\Theta).
\end{equation}
Because we assume that $(X,V,R)$ has fourth moments
(Assumption \ref{assumptions} (ii)), the cumulant generating function is
\begin{align*}
\log\psi_N(\s,T,\Theta)
=& \log\psi^0_{X}(s) + \log\psi^0_{V}(T) +\log\psi^0_{R}(\Theta) \\
 & + \frac{1}{\sqrt{N}} Q_{3}(\s,T,\Theta) + \frac{1}{N} Q_{4}(\s,T,\Theta) + o(N^{-1}),
\end{align*}
where
\begin{align*}
 Q_{k}(\s,T,\Theta) = \sum_{u+v+w=k}\frac{1}{u!\,v!\,w!} K_{u,v,w}(\s,T,\Theta),\ \ k=3,4.
\end{align*}
Therefore,
$\psi_N(\s,T,\Theta) = \widehat\psi_N(\s,T,\Theta) + o(N^{-1})$,
where
\begin{equation}
\label{mgf-hat}
\begin{aligned}
 \widehat\psi_N(\s,T,\Theta)
=& \psi^0_X(s)\psi^0_V(T)\psi^0_R(\Theta) \\
 & \times \Bigl(1 +\frac{1}{\sqrt{N}} Q_{3}(\s,T,\Theta) +\frac{1}{N} Q_{4}(\s,T,\Theta)
 +\frac{1}{2N} Q_{3}(\s,T,\Theta)^2 \Bigr).
\end{aligned}
\end{equation}

Define
\begin{equation}
\label{mgf_V0}
\begin{aligned}
 \psi_{N|V=0}(\s,\Theta)
=& \int_{\R} \int_{\Sym(n)} e^{\iii(s x+\tr(\Theta R))} p_N(x,0,R) \dd R\,\dd x \\
=& \frac{1}{(2\pi)^n}\int_{\R^n} \psi_N(\s,T,\Theta) \dd T.
\end{aligned}
\end{equation}
Its truncated version is
\begin{equation}
\label{mgf_V0-hat}
\begin{aligned}
 \widehat\psi_{N|V=0}(\s,\Theta)
=& \int_{\R} \int_{\Sym(n)} e^{\iii(s x+\tr(\Theta R))} \widehat p_N(x,0,R) \dd R\,\dd x \\
=& \frac{1}{(2\pi)^n}\int_{\R^n} \widehat\psi_N(\s,T,\Theta) \dd T,
\end{aligned}
\end{equation}
where $\widehat p_N(x,V,R)$ denotes the Fourier inversion of $\widehat\psi_N(\s,T,\Theta)$.
The explicit form of $\widehat p_N(x,V,R)$ is not required here.
Recall that the terms in parentheses in (\ref{mgf-hat}) are a linear combination of (\ref{K}).
In the following, we obtain the concrete form of (\ref{mgf_V0-hat}).

In (\ref{mgf_V0-hat}),
the integration with respect to $\dd T$ is conducted as an expectation with respect to the Gaussian random variable $T\sim \mathcal{N}_n(0,\gamma^{-1}I_n)$,
and multiplying it by the normalizing factor
\begin{equation}
\label{normatizing-T}
 \frac{1}{(2\pi)^n}\int_{\R^n}e^{-\frac{1}{2}\gamma\Vert T\Vert^2} \dd T = (2\pi\gamma)^{-n/2}.
\end{equation}
Let
\[
 (n)_m = n(n-1)\cdots(n-m+1) = \Gamma(n+1)/\Gamma(n-m+1)
\]
be the falling factorial.
Because $\Vert T\Vert$ is independent of
$\prod_{j\ge 1} (T^\top\Theta^j T/\Vert T\Vert^2)^{v_j}$
under $T\sim \mathcal{N}_n(0,\gamma^{-1}I_n)$,
the expectation of the factor of (\ref{K}) including $T$ becomes
\begin{align*}
 \E^{T}\Bigl[{\prod}_{j\ge 0}(T^\top \Theta^j T)^{v_j}\Bigr]
=& \E^{T}\bigl[\Vert T\Vert^{2 v}\bigr] \E^{T}\Biggl[\frac{\prod_{j\ge 1}(T^\top \Theta^j T)^{v_j}}{\Vert T\Vert^{2(v-v_0)}}\Biggr] \\
=& \E^{T}\bigl[\Vert T\Vert^{2 v}\bigr] \frac{\E^{T}\bigl[\prod_{j\ge 1}(T^\top \Theta^j T)^{v_j}\bigr]}{\E^{T}\bigl[\Vert T\Vert^{2(v-v_0)}\bigr]} \nonumber \\
=& (-2/\gamma)^{v_0} (-(n/2+v-v_0))_{v_0} \times \zeta_{v_1,v_2,\ldots}(\Theta), \nonumber
\end{align*}
where $v=\sum_{j\ge 0} v_j$ and
\begin{equation*}
 \zeta_{v_1,v_2,\ldots}(\Theta)
 = \E\biggl[{\prod}_{j\ge 1}(\xi^\top \Theta^j \xi)^{v_j}\biggr], \quad \xi\sim \mathcal{N}_n(0,I_n)
\end{equation*}
is a polynomial in $\tr(\Theta^j)$, $j\ge 1$.
For example, $\zeta_{1}(\Theta)=\tr(\Theta)$, $\zeta_{2}(\Theta)=\tr(\Theta^2)$, $\zeta_{1,1}(\Theta)=\tr(\Theta)^2+2\tr(\Theta^2)$. 
Note that $\zeta_{v_1,v_2,\ldots}(\Theta)$ does not include the dimension $n$ explicitly.

The terms in $Q_3$, $Q_4$, and $\frac{1}{2}Q_3^2$ in (\ref{mgf-hat}) including $T$ can be rewritten according to the rules below:
\begin{align}
\begin{aligned}
 \Vert T\Vert^{2} &\mapsto \gamma^{-1} n, \qquad
 \Vert T\Vert^{4} \mapsto \gamma^{-2} n(n+2), \\
 T^\top\Theta^k T &\mapsto \gamma^{-1} \tr(\Theta^k), \quad k=1,2, \\
 \Vert T\Vert^2 \cdot T^\top\Theta T &\mapsto \gamma^{-2}(n+2)\tr(\Theta), \\
 (T^\top\Theta T)^2 &\mapsto \gamma^{-2}[\tr(\Theta)^2+2\tr(\Theta^2)],
\end{aligned}
\label{rule1}
\end{align}
and multiplied by the normalizing factor (\ref{normatizing-T}).
Then, the truncated version of $\psi_{N|V=0}(\s,\Theta)$ in (\ref{mgf_V0}) is obtained as
\begin{equation}
\label{mgf_V0-hat1}
\begin{aligned}
 \widehat\psi_{N|V=0}(\s,\Theta)
 =& \frac{1}{(2\pi)^n}\int_{\R^n} \widehat\psi_N(\s,T,\Theta) \dd T \\
 =& \psi^0_{X}(s) \psi^0_{R}(\Theta) (2\pi\gamma)^{-n/2} \\
  & \times \Bigl(1 + \frac{1}{\sqrt{N}}\widetilde Q_{3}(\s,\Theta) + \frac{1}{N}\widetilde Q_{4}(\s,\Theta) + \frac{1}{2N}\widetilde Q_{3}^{(2)}(\s,\Theta) \Bigr),
\end{aligned}
\end{equation}
where
$\widetilde Q_{3}(\s,\Theta)$, $\widetilde Q_{4}(\s,\Theta)$, and $\widetilde Q_{3}^{(2)}(\s,\Theta)$
are
$Q_{3}(\s,T,\Theta)$, $Q_{4}(\s,T,\Theta)$, and $Q_{3}(\s,T,\Theta)^2$ with terms including $T$ replaced according to the rules (\ref{rule1}).

Recall that the integral we are going to obtain is $\Xi_{n,N}(x)$ in (\ref{Xin}).
Define
\begin{align*}
\psi_{N|x,V=0}(\Theta)
=& \int_{\Sym(n)}e^{\iii\tr(\Theta R)} p_N(x,0,R) \dd R \\
=& \frac{1}{2\pi}\int_{\R} e^{-\iii s x} \psi_{N|V=0}(\s,\Theta) \dd s,
\end{align*}
where $\psi_{N|V=0}(\s,\Theta)$ is defined in (\ref{mgf_V0}).
Using this,
\[
 \int_{\Sym(n)} \det(-R+\gamma x I_n) p_N(x,0,R) \dd R
 = \det\Bigl(-\iiii D_\Theta + \gamma x I_n\Bigr) \Big|_{\Theta=0}\psi_{N|x,V=0}(\Theta),
\]
where $D_\Theta$ is an $n\times n$ symmetric matrix differential operator defined by
\begin{equation}
\label{DTheta}
(D_\Theta)_{ij} = \frac{1+\delta_{ij}}{2}\frac{\partial}{\partial (\Theta)_{ij}} = \frac{\partial}{\partial \tau_{ij}} \quad (i\le j).
\end{equation}
Therefore, (\ref{Xin}) is evaluated as
\begin{equation*}
 \Xi_{n,N}(x) = \int_{x}^\infty \biggl[
 \det\Bigl(-\iiii D_\Theta + \gamma x I_n\Bigr) \Big|_{\Theta=0}
 \frac{1}{2\pi}\int_{\R} e^{-\iii s x} \psi_{N|V=0}(\s,\Theta) \dd s \biggr] \dd x.
\end{equation*}
The truncated version of $\Xi_{n,N}(x)$ is
\begin{equation}
\label{target}
 \widehat\Xi_{n,N}(x) = \int_{x}^\infty \biggl[
 \det\Bigl(-\iiii D_\Theta + \gamma x I_n\Bigr) \Big|_{\Theta=0}
 \frac{1}{2\pi}\int_{\R} e^{-\iii s x} \widehat\psi_{N|V=0}(\s,\Theta) \dd s \biggr] \dd x,
\end{equation}
which is the valid asymptotic expansion formula for $\Xi_{n,N}(x)$ as follows.
\begin{lemma}
\label{lem:validity}
Under Assumption \ref{assumptions},
\[
 \Xi_{n,N}(x)=\widehat\Xi_{n,N}(x)+o(N^{-1}) \quad \mbox{as $N\to\infty$ uniformly in $x$}.
\]
\end{lemma}
The proof is provided in Section \ref{sec:asympt}.

Lemma \ref{lem:validity} states that our target is $\widehat\Xi_{n,N}(x)$.
The integral in (\ref{target}) with respect to $\dd s$ can be easily evaluated by
\begin{equation}
\label{s2x}
 \frac{1}{2\pi}\int_{\R} e^{-\iii s x} \psi^0_{X}(s) (\iii s)^k \dd s = H_k(x) \phi(x),
\end{equation}
where $\phi(x)$ is the probability density function of the standard Gaussian distribution $\mathcal{N}(0,1)$ in (\ref{gauss_pdf}), and
$H_k(x)$ is the Hermite polynomial of degree $k$ defined in (\ref{hermite}).
For the derivatives with respect to $\Theta$, we use the lemma below.
The proof is presented in Section \ref{sec:Hermite}.

\begin{lemma}
\label{lem:DThetadet}
For $\psi^0_{R}(\Theta) = e^{-\frac{1}{2}\alpha\tr(\Theta^2) -\frac{1}{2}\beta\tr(\Theta)^2}$, $\gamma=\sqrt{\alpha/2-\beta}$,
and positive integers $c_i$ such that $m=\sum_{i=1}^k c_i\le n$,
\begin{equation}
\label{theta2x}
\begin{aligned}
 \det\Bigl(-\iiii D_\Theta + \gamma x I_n\Bigr) \bigl(\psi^0_R(\Theta)\tr(\Theta^{c_1})\cdots\tr(\Theta^{c_k})\bigr)\Big|_{\Theta=0} \\ % \iii^m \\
= \iii^m \gamma^{n-m} (-1/2)^{m-k} (n)_m H_{n-m}(x).
\end{aligned}
\end{equation}
\end{lemma}

Now, we summarize the entire procedure for obtaining $\widehat\Xi_{n,N}(x)$ in (\ref{target}).
\begin{itemize}

\item[Step 0.]
Express $K_{u,v,w}(s,T,\Theta)$ with the base functions in (\ref{K}) and the derivatives of $\kappa^{(k)}$, $k=2,3,4$
 (executed in Section \ref{sec:iso_cumulants}).

\item[Step 1.]
Expand the inside of the parentheses of $\widehat\psi_N(\s,T,\Theta)$ in (\ref{mgf-hat}) and applying the term-rewriting rules in (\ref{rule1}) to obtain $\widehat\psi_{N|V=0}(\s,\Theta)$ in (\ref{mgf_V0-hat1}).
The resulting function is the product of $\psi^0_X(s)\psi^0_R(\Theta)$ and a polynomial in $s$ and $\tr(\Theta^k)$, $k\le 4$.

\item[Step 2.]
Applying (\ref{s2x}) and (\ref{theta2x}) to the result of Step 1 yields the inside brackets in (\ref{target}),
which is $\phi(x)$ multiplied by a polynomial in $x^k$ and $H_{k}(x)$.

\item[Step 3.]
By using the three-term relation
$x H_{k}(x) = H_{k+1}(x) + k H_{k-1}(x)$ in (\ref{three-term}),
we reform the result of Step 2 to be $\phi(x)$ multiplied by a linear combination of $H_{k}(x)$.

\item[Step 4.]
Applying the integration $\int_{x}^\infty \dd x$ to the result of Step 3, by using
\[
 \int_{x}^\infty H_k (x') \phi(x') \dd x' = H_{k-1}(x) \phi(x),
\]
yields the integral $\widehat\Xi_{n,N}(x)$ in (\ref{target}).
Let $H_{-1}(x) = \phi(x)^{-1}\int_x^\infty \phi(x') \dd x'$.
\end{itemize}

Every step is doable using computational algebra.
By performing all the steps, the proof of Theorem \ref{thm:expansion} is completed.

\subsection{Undetectable non-Gaussianity and proof of Theorem \ref{thm:loop}}
\label{sec:undetectable}

In this section, we prove Theorem \ref{thm:loop}.

For $\ell\ge 2$, let
\[
 \Pi_\ell(\Theta) = \tr(\Theta)^\ell - (-2)^{\ell-1}\tr(\Theta^\ell).
\]
Because of Lemma \ref{lem:DThetadet},
for positive integers $c_i$ such that $m=\sum_{i=1}^k c_i\le n-\ell$,
\begin{equation*}
\begin{aligned}
\det & \Bigl(-\iiii D_\Theta + \gamma x I_n\Bigr) \bigl(\psi^0_R(\Theta)\tr(\Theta^{c_1})\cdots\tr(\Theta^{c_k}) \Pi_\ell(\Theta) \bigr)\Big|_{\Theta=0} \\
=& \iii^{m+\ell} \gamma^{n-(m+\ell)} (-1/2)^{(m+\ell)-(k+\ell)} (n)_{m+\ell} H_{n-(m+\ell)}(x) \\
& -(-2)^{\ell-1} \iii^{m+\ell} \gamma^{n-(m+\ell)} (-1/2)^{(m+\ell)-(k+1)} (n)_{m+\ell} H_{n-(m+\ell)}(x) \\
=& 0.
\end{aligned}
\end{equation*}
Therefore, if terms containing the factor $\Pi_\ell(\Theta)$ exist, they automatically vanish in Step 2.
Actually, such a term appears.
For instance,
\begin{equation}
\label{K003} 
\begin{aligned}
K_{0,0,3}(s,T,\Theta)
&= \tr(\Theta\nabla^2_{t_1})\tr(\Theta\nabla^2_{t_2})\tr(\Theta\nabla^2_{t_3})
 \kappa^{(3)}\bigl((\tfrac{1}{2}\Vert t_a-t_b\Vert^2)_{1\le a<b\le 3}\bigr)\big|_{t_1=t_2=t_3} \\
&= 6\kappa^{(3)}_{(12),(12),(23)}(0) \Pi_2(\Theta)\tr(\Theta) + 2\kappa^{(3)}_{(12),(13),(23)}(0) \Pi_3(\Theta).
\end{aligned}
\end{equation} 
Each term on the right-hand side in (\ref{K003}) includes a factor $\Pi_\ell(\Theta)$ and a derivatives of $\kappa^{(3)}$ which has a cycle in its diagram (Figure \ref{fig:loop+tree} (a) and (b)).

\begin{proof}[Proof of Theorem \ref{thm:loop}]

Let $x_{ab}=\frac{1}{2}\Vert t_a-t_b\Vert^2$.
Suppose that the diagram of the derivative
\[
 \kappa^{(k)}_E(0) = \Biggl(\prod_{(a,b)\in E}\Bigl(\frac{\partial}{\partial x_{ab}}\Bigr)\Biggr) \kappa^{(k)}\bigl((x_{ab})_{1\le a<b\le k}\bigr)\Big|_{t_1=\cdots=t_k}
\]
contains a cycle
$C = \{(1,2),\ldots,(\ell-1,\ell),(1,\ell)\} \subset E$.

The derivative $\kappa^{(k)}_E(0)$ appears in the Taylor series
\begin{align}
\label{taylor}
 \kappa^{(k)}\bigl((x_{ab})_{1\le a<b}\bigr)
 = \cdots + \kappa^{(k)}_E(0)\times \Delta\widetilde\Delta + \cdots,
\end{align}
where $\Delta=x_{12}\cdots x_{\ell-1,\ell}x_{1,\ell}$ and $\widetilde\Delta=\prod_{(a,b)\in E\setminus C}x_{ab}$.

We consider the application of operators $\langle T,\nabla_{t_b}\rangle$ and/or $\tr(\Theta\nabla^2_{t_c})$ to (\ref{taylor}), and evaluate it at $t_1=\cdots=t_k$. 
After applying these differentiation operations, the coefficient $\Delta\widetilde\Delta$ of $\kappa^{(k)}_E(0)$ should be reduced to a nonzero constant.
(Otherwise, it vanishes when evaluated at $t_1=\cdots=t_k$.)
At least, $\Delta$ should be reduced to a nonzero constant.

The only possible operation that makes $\Delta$ a nonzero constant is $\prod_{a=1}^\ell \tr(\Theta\nabla^2_{t_a})$:
\begin{equation}
\begin{aligned}
\label{trtr}
& \tr(\Theta\nabla^2_{t_1}) \cdots \tr(\Theta\nabla^2_{t_\ell})
 (x_{12}\cdots x_{\ell-1,\ell}x_{1,\ell}) \\
&= \sum_{{i_1,\ldots,i_v,j_1,\ldots,j_v}=1}^n
 \theta_{i_1 j_1}\cdots\theta_{i_v j_v}
 \prod_{a=1}^\ell \frac{\partial^{2}}{\partial t_{a}^{i_a}\partial t_{a}^{j_a}}
 (x_{12}\cdots x_{\ell-1,\ell}x_{1,\ell}).
\end{aligned}
\end{equation}
By selecting the non-vanishing terms, we obtain
\begin{align*}
\prod_{a=1}^\ell & \frac{\partial^{2}}{\partial t_{a}^{i_a}\partial t_{a}^{j_a}}
 (x_{12}\cdots x_{\ell-1,\ell}x_{1,\ell}) 
= \prod_{a=1}^\ell \frac{\partial^{2}x_{a,a+1}}{\partial t_{a}^{i_a}\partial t_{a}^{j_a}}
 +\prod_{a=1}^\ell \frac{\partial^{2}x_{a,a+1}}{\partial t_{a+1}^{i_{a+1}}\partial t_{a+1}^{j_{a+1}}} \\
&\qquad + \sum_{(\epsilon_1,\ldots,\epsilon_\ell)\in\{0,1\}^\ell} \prod_{a=1}^\ell\biggl((1-\epsilon_a)\frac{\partial^{2}x_{a,a+1}}{\partial t_{a}^{i_a}\partial t_{a+1}^{j_{a+1}}}
 +\epsilon_a \frac{\partial^{2}x_{a,a+1}}{\partial t_{a+1}^{i_{a+1}}\partial t_{a}^{j_a}}\biggr) \\
&= 2\prod_{a=1}^\ell \delta_{i_a j_a} + (-1)^\ell \prod_{a=1}^\ell \bigl(\delta_{i_{a} j_{a+1}} + \delta_{i_{a+1} j_{a}} \bigr)
\end{align*}
(letting $x_{\ell,\ell+1}=x_{1,\ell}$ and $i_{\ell+1}=i_1$, $j_{\ell+1}=j_1$),
and hence
\[
 (\ref{trtr}) = 2\tr(\Theta)^\ell - (-1)^{\ell}2^\ell \tr(\Theta^\ell) = 2\Pi_\ell(\Theta).
\]
This implies that the coefficients of $\kappa^{(k)}_E(0)$ have the factor $\Pi_\ell(\Theta)$.
\end{proof}

\appendix

\section{}

\subsection{Identities on the Hermite polynomial and proof of
 Lemma \ref{lem:DThetadet}}
\label{sec:Hermite}

We propose the identities of the Hermite polynomial, which is crucial in the derivation of the expansion.

For an $n\times n$ symmetric matrix $A$, the principal minor matrix corresponding to indices $K\subset\{1,\ldots,n\}$ is denoted by
$A[K] = (a_{ij})_{i,j\in K}$.
We note that $A=A[\{1,\ldots,n\}]$.

We first prove the following lemma.
Recall that $\Theta=(\theta_{ij})$ and $D_\Theta=(d_{ij})$ are defined in (\ref{Theta}) and (\ref{DTheta}), respectively.

\begin{lemma}
\label{lem:DThetadet1}
For positive integers $c_i$ such that $m=\sum_{i=1}^k c_i\le n$,
\begin{equation}
 \det(x I + D_\Theta) \bigl(e^{\tr(\Theta^2)}\tr(\Theta^{c_1})\cdots\tr(\Theta^{c_\ell})\bigr)\Big|_{\Theta=0} = (-1/2)^{m-\ell} (n)_m H_{n-m}(x).
\label{theta2x1}
\end{equation}
\end{lemma}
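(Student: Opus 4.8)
The plan is to read $e^{\tr(\Theta^2)}$ as the moment generating function of a Gaussian orthogonal invariant matrix and thereby convert the determinant of differential operators into an expected characteristic polynomial. Let $R\in\Sym(n)$ be the zero-mean Gaussian matrix with $\E[e^{\tr(\Theta R)}]=e^{\tr(\Theta^2)}$, i.e. $\Cov(R_{ij},R_{kl})=\delta_{ik}\delta_{jl}+\delta_{il}\delta_{jk}$ (the case $\alpha=2$, $\beta=0$). Since $(D_\Theta)_{ij}=\partial/\partial\tau_{ij}$ pulls down $R_{ij}$ from $e^{\tr(\Theta R)}$, conjugation by the exponential yields the entrywise operator shift $e^{-\tr(\Theta R)}(xI+D_\Theta)e^{\tr(\Theta R)}=xI+R+D_\Theta$. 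Writing $f(\Theta)=\tr(\Theta^{c_1})\cdots\tr(\Theta^{c_\ell})$ and moving the operator inside the expectation, I would reduce the left-hand side of (\ref{theta2x1}) to
\[
 \det(xI+D_\Theta)\bigl[e^{\tr(\Theta^2)}f\bigr]\Big|_{\Theta=0}
 = \E\Bigl[\det(xI+R+D_\Theta)\,f\big|_{\Theta=0}\Bigr].
\]

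Next I would exploit homogeneity. As $xI+R$ is independent of $\Theta$, only the entries of $D_\Theta$ lower the degree in $\Theta$ in the multilinear expansion of the determinant. Because $f$ is homogeneous of degree $m=\sum_i c_i$, a term using $j$ operator entries produces a polynomial homogeneous of degree $m-j$, which vanishes at $\Theta=0$ unless $j=m$. Hence only the terms with exactly $m$ derivative factors survive. Expanding the determinant along those rows and columns (Laplace expansion, extending the minor notation $A[K]$ of the appendix to a row set $S$ and column set $T$), each surviving term factors as a signed product
\[
 \det\bigl(D_\Theta[S,T]\bigr)f\big|_{\Theta=0}\times\det\bigl((xI+R)[\,\overline S,\overline T\,]\bigr),\qquad |S|=|T|=m,
\]
the first factor a pure number, the second an $(n-m)\times(n-m)$ minor of the constant matrix $xI+R$.

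The two factors are then treated separately. For the second, I would resum the powers of $x$ together with the complementary minors of $R$ and take the expectation: antisymmetry forces the surviving contribution onto the diagonal block $\overline S=\overline T$, and $\E[\det(xI+R)]$ for an $(n-m)$-dimensional Gaussian orthogonal invariant matrix is exactly the expected characteristic polynomial of the GOE, which equals $H_{n-m}(x)$ (this is the $\ell=0$ instance of the lemma, confirmed directly from $\E[\det(\mathrm{GOE}_{2p})]=(-1)^p(2p-1)!!$ matching the Hermite coefficients). For the first factor, $\det(D_\Theta[S,T])$ is the signed sum over bijections $S\to T$, while each $\tr(\Theta^{c_i})$ is a single cyclic word of length $c_i$; matching the $c_i$ derivatives to that cycle forces the bijection to run along it, and $\partial/\partial\tau_{ij}$ acting on an off-diagonal $\theta_{ij}$ contributes a factor $\tfrac12$. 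Since a $c_i$-cycle has sign $(-1)^{c_i-1}$, each trace contributes $(-\tfrac12)^{c_i-1}$, and the free assignment of the $m$ distinct indices over $\{1,\dots,n\}$ contributes $(n)_m$, assembling the global constant $(-1/2)^{m-\ell}(n)_m$.

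I expect the main obstacle to be precisely this last combinatorial bookkeeping: verifying that the $m$-th order derivative block acting on $\prod_i\tr(\Theta^{c_i})$ decouples cleanly from the complementary GOE minor, so that the two factors genuinely separate under the expectation, and that the cycle-versus-permutation sign together with the diagonal/off-diagonal derivative weights collapse to exactly $(-1/2)^{m-\ell}(n)_m$ with no residual $x$- or $n$-dependence beyond $(n)_m$. I would pin down the signs and normalizations on small cases (e.g. $n=1$, $c_1=1$ giving $1$, and $n=2$, $\ell=0$ giving $x^2-1=H_2$) before carrying out the general argument, with induction on $\ell$ and on the $c_i$ as a fallback for organizing the contractions.
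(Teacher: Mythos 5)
Your argument is correct and reaches the right constant, but it is organized differently from the paper's proof. The paper expands $\det(xI+D_\Theta)$ over principal minors and then over permutations, classifies the non-vanishing permutations (each $\tr(\Theta^{c_i})$ absorbs a $c_i$-cycle, the exponential absorbs the remaining indices as $2$-cycles), counts them, and only at the end recognizes the resulting polynomial in $x$ as $(-1/2)^{m-\ell}(n)_m H_{n-m}(x)$ by matching against the explicit Hermite coefficients. You instead realize $e^{\tr(\Theta^2)}$ as $\E[e^{\tr(\Theta R)}]$ for a symmetric Gaussian $R$ with $\E[R_{ij}R_{kl}]=\delta_{ik}\delta_{jl}+\delta_{il}\delta_{jk}$, conjugate to get $\E\bigl[\det(xI+R+D_\Theta)f\big|_{\Theta=0}\bigr]$, and split the determinant into a pure derivative block acting on $f$ times a complementary minor of $xI+R$ whose expectation is $H_{n-m}(x)$. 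This is essentially the paper's own Lemma \ref{lem:DBdet} and Corollary \ref{cor:Edet} run in reverse; since the paper derives those \emph{from} the present lemma, you must (and do) supply an independent proof of $\E[\det(xI_N+A)]=H_N(x)$ via the Wick computation to avoid circularity. What your route buys is that the exponential factor and the powers of $x$ are absorbed in one stroke by the known expected-characteristic-polynomial identity, isolating the new work in the trace-derivative block; the cost is that evaluating that block (cycles of $\sigma$ matching cyclic words, the $2c_i$ rotations and reflections, the factor $1/2$ per off-diagonal derivative, the sign $(-1)^{c_i-1}$, and the $(n)_m$ from assigning indices) is exactly the computation the paper carries out, so the combinatorial core is shared. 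Two points to tighten: (i) the off-diagonal blocks $S\ne T$ already vanish because a bijection $S\to T$ whose edges tile disjoint cycles forces $S=T$, so the sign-flip symmetry of $R$ is not needed (though it is also valid); (ii) the per-trace weight is $2c_i\cdot(1/2)^{c_i}\cdot(-1)^{c_i-1}\cdot(c_i-1)!=c_i!\,(-1/2)^{c_i-1}$ per block, which combined with the $m!/\prod_i c_i!$ ordered partitions of $S$ and the $\binom{n}{m}$ choices of $S$ does give $(n)_m(-1/2)^{m-\ell}$ as you claim.
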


\begin{proof}
Based on the expansion formula
\[
 \det(x I_n + D_\Theta) = \sum_{k=0}^n x^{n-k} \sum_{K:K\subset\{1,\ldots,n\},\,|K|=k} \det\bigl(D_{\Theta[K]}\bigr),
\]
the left-hand side of (\ref{theta2x1}) is a polynomial in $x$ with coefficients in the form of
\begin{equation}
\label{detDK-e-tr}
 \det(D_{\Theta[K]}) \bigl(e^{\tr(\Theta^2)}\tr(\Theta^{c_1})\cdots\tr(\Theta^{c_\ell})\bigr)\Big|_{\Theta=0}.
\end{equation}
By symmetry, it suffices to consider the case $K=\{1,\ldots,k\}$.
Let $\Theta_k=\Theta[\{1,\ldots,k\}]$.

By the definition of the determinant,
\begin{equation*}
 \det(D_{\Theta_k}) = \sum_{\sigma\in S_k} \sgn(\sigma) d_{1\sigma(1)}\cdots d_{n\sigma(k)},
\end{equation*}
where $S_k$ denotes the permutation group on $\{1,\ldots,k\}$.

$\tr(\Theta^c)$ is a linear combination of the terms of the form
$\theta_{j_1 j_2}\theta_{j_2 j_3}\cdots\theta_{j_{c-1} j_c}\theta_{j_c j_1}$.
The form
\begin{equation}
\label{d-tr}
 d_{i_1\sigma(i_1)}\cdots d_{i_{e}\sigma(i_{e})} \bigl(\theta_{j_1 j_2}\theta_{j_2 j_3}\cdots\theta_{j_{c-1} j_c}\theta_{j_c j_1}\bigr) \Big|_{\Theta=0}
\quad (1\le i_1<\cdots<i_{e}\le k)
\end{equation}
does not vanish iff $e=c$, the map $\begin{pmatrix} i_1 & \cdots & i_{c} \\ \sigma(i_1) & \cdots & \sigma(i_{c}) \end{pmatrix}$ forms a cycle of length $c$,
and
\[
 (j_1,\ldots,j_c) = (i_h,\sigma(i_h),\sigma^2(i_h)\ldots,\sigma^{c-1}(i_h)) \ \ \mbox{or}\ \ (i_h,\sigma^{-1}(i_h),\ldots,\sigma^{-(c-1)}(i_h))
\]
for some  $h=1,\ldots,c$ (i.e., there are $2c$ ways).
The value of (\ref{d-tr}) is $(1/2)^c$ if it does not vanish.

The form
\begin{equation}
\label{d-e}
 d_{i_1\sigma(i_1)}\cdots d_{i_{e}\sigma(i_{e})}e^{\tr(\Theta^2)} \Big|_{\Theta=0} \quad (1\le i_1<\cdots<i_{e}\le k)
\end{equation}
does not vanish iff $e$ is even, and the map $\begin{pmatrix} i_1 & \cdots & i_{e} \\ \sigma(i_1) & \cdots & \sigma(i_{e}) \end{pmatrix}$ is a product of $e/2$ cycles of length 2.
The value of (\ref{d-e}) is $1$ if it does not vanish.

Therefore,
\[
 \sgn(\sigma) d_{1\sigma(1)}\cdots d_{k\sigma(k)} \bigl(e^{\tr(\Theta^2)}\tr(\Theta^{c_1})\cdots\tr(\Theta^{c_\ell})\bigr)\Big|_{\Theta=0}
\]
does not vanish iff $\sigma$ (in the cycle product form) is factorized as
$\ell$ cycles of length $c_i$, $i=1,\ldots,\ell$, and $e/2$ cycles of length 2, where $e=k-\sum_{i=1}^\ell c_i=k-m$ is even.
Note that the number of cycles made from distinct $c$ atoms is $(c-1)!$,
and the number of such $\sigma$ is
\begin{align*}
& \binom{k-c_1}{c_1} (c_1-1)! \times \binom{k-c_1-c_2}{c_2} (c_2-1)! \times\cdots \\ & \qquad \times 
 \binom{k-c_1-\cdots -c_{\ell-1}}{c_\ell} (c_\ell-1)! \times (k-m)!! \\
& = \frac{(k)_m}{\prod c_i} \times\frac{(k-m)!}{2^{\frac{1}{2}(k-m)}(\frac{k-m}{2})!}
  = \frac{k!}{(\prod c_i) 2^{\frac{1}{2}(k-m)}(\frac{k-m}{2})!}.
\end{align*}
The sign of $\sigma$ is
\[
 \sgn(\sigma)=\prod_{i=1}^\ell (-1)^{c_i-1} \times (-1)^{(k-m)/2}
 = (-1)^{m-\ell+(k-m)/2}.
\]

Therefore,
\begin{equation*}
 (\mathrm{\ref{detDK-e-tr}})
 = \frac{k!}{(\prod c_i) 2^{\frac{1}{2}(k-m)}(\frac{k-m}{2})!}
 \times \prod_{i=1}^\ell \bigl( 2 c_i\times (1/2)^{c_i}\bigr) \times 1 
 = \frac{k!}{2^{m-\ell+(k-m)/2}(\frac{k-m}{2})!}.
\end{equation*}

We now show that the left-hand side of (\ref{theta2x1}) is
\begin{align*}
& \sum_{k=m,\,k-m:\rm even}^n x^{n-k} \binom{n}{k} \frac{k!}{2^{m-\ell+(k-m)/2}(\frac{k-m}{2})!} (-1)^{m-\ell+(k-m)/2} \\
& = (-1/2)^{m-\ell}\sum_{k'=0}^{[\frac{n-m}{2}]} x^{n-m-2k'} \frac{n!}{2^{k'}(n-m-2k')!k'!} (-1)^{k'} \qquad \Bigl(k'=\frac{k-m}{2}\Bigr) \\
& = (-1/2)^{m-\ell} (n)_m H_{n-m}(x).
\end{align*}
\end{proof}

\begin{lemma}
\label{lem:DThetadet2}
For any $\beta$,
and positive integers $c_i$ such that $m=\sum_{i=1}^k c_i\le n$,
\begin{equation}
\label{theta2x2}
\begin{aligned}
 \det(x I + D_\Theta) \bigl(e^{(1+\beta)\tr(\Theta^2)+\frac{\beta}{2}\tr(\Theta)^2}\tr(\Theta^{c_1})\cdots\tr(\Theta^{c_\ell})\bigr)\Big|_{\Theta=0} \\
 = (-1/2)^{m-\ell} (n)_m H_{n-m}(x).
\end{aligned}
\end{equation}
\end{lemma}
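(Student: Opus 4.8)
The plan is to reduce Lemma \ref{lem:DThetadet2} to the already-proved Lemma \ref{lem:DThetadet1} by disentangling the two quadratic terms in the exponent: the extra term $\tfrac{\beta}{2}\tr(\Theta)^2$ will be linearized by a Gaussian (Hubbard--Stratonovich) representation so that it becomes a shift of the argument $x$, while the scalar factor $(1+\beta)$ multiplying $\tr(\Theta^2)$ will be removed by a dilation $\Theta\mapsto\mu\Theta$. The whole $\beta$-dependence will then be shown to cancel between the dilation factors and a Hermite--Gaussian convolution identity.

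First I would record a shift identity. Since $(D_\Theta)_{ij}\tr(\Theta)=\delta_{ij}$, multiplication by $e^{c\tr(\Theta)}$ conjugates the (commuting) derivation into $D_\Theta+cI_n$, so that for the polynomial $\det(xI_n+\,\cdot\,)$,
\[
 \det(xI_n+D_\Theta)\bigl(e^{c\tr(\Theta)}f(\Theta)\bigr)\big|_{\Theta=0}
 = \det\bigl((x+c)I_n+D_\Theta\bigr)f(\Theta)\big|_{\Theta=0}.
\]
Next, for $\beta\ge 0$ I would write $e^{\frac{\beta}{2}\tr(\Theta)^2}=\E_Z\bigl[e^{\sqrt{\beta}\,Z\,\tr(\Theta)}\bigr]$ with $Z\sim N(0,1)$, interchange $\E_Z$ with the operator and the evaluation at $\Theta=0$, and apply the shift identity with $c=\sqrt{\beta}\,Z$. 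This turns the left-hand side of (\ref{theta2x2}) into
\[
 \E_Z\Bigl[\det\bigl((x+\sqrt{\beta}\,Z)I_n+D_\Theta\bigr)\bigl(e^{(1+\beta)\tr(\Theta^2)}\tr(\Theta^{c_1})\cdots\tr(\Theta^{c_\ell})\bigr)\big|_{\Theta=0}\Bigr].
\]

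Then, writing $\mu=(1+\beta)^{-1/2}$ and substituting $\Theta=\mu\Phi$, one has $e^{(1+\beta)\tr(\Theta^2)}=e^{\tr(\Phi^2)}$, $\tr(\Theta^{c_i})=\mu^{c_i}\tr(\Phi^{c_i})$, $D_\Theta=\mu^{-1}D_\Phi$, and $\det(yI_n+D_\Theta)=\mu^{-n}\det(\mu y I_n+D_\Phi)$, so Lemma \ref{lem:DThetadet1} (applied at argument $\mu y$, with $y=x+\sqrt{\beta}\,Z$ and $m=\sum_i c_i$) evaluates the inner bracket as $\mu^{m-n}(-1/2)^{m-\ell}(n)_m H_{n-m}\bigl(\mu(x+\sqrt{\beta}\,Z)\bigr)$. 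Finally, the classical identity $\E_{Z\sim N(0,1)}[H_k(a+sZ)]=(1-s^2)^{k/2}H_k\bigl(a/\sqrt{1-s^2}\bigr)$ (checked at once from the generating function $\sum_k \tfrac{t^k}{k!}H_k(x)=e^{tx-t^2/2}$) with $a=\mu x$, $s=\mu\sqrt{\beta}$, $1-s^2=\mu^2$, gives $\E_Z\bigl[H_{n-m}(\mu x+\mu\sqrt{\beta}\,Z)\bigr]=\mu^{n-m}H_{n-m}(x)$; the factors $\mu^{m-n}$ and $\mu^{n-m}$ cancel, leaving $(-1/2)^{m-\ell}(n)_m H_{n-m}(x)$, independent of $\beta$. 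For $\beta<0$ the claim follows by analytic continuation, since the left side is a polynomial in $\beta$ (the operator $\det(xI_n+D_\Theta)$ has order $\le n$, so only boundedly many powers of $\beta$ survive after evaluation at $\Theta=0$) and the right side is constant.

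The main obstacle is the exact cancellation of the dilation powers: one must verify that the $\mu^{m-n}$ produced by the rescaling to Lemma \ref{lem:DThetadet1} and the $\mu^{n-m}$ produced by averaging out the Gaussian shift combine to $1$, which is precisely the mechanism that erases the $\beta$-dependence. A secondary technical point is justifying the interchange of $\E_Z$ with the differential operator in Step 2 and the subsequent analytic continuation to $\beta<0$.
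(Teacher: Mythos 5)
Your proof is correct, but it follows a genuinely different route from the paper's. The paper expands $e^{\beta(\tr(\Theta^2)+\frac{1}{2}\tr(\Theta)^2)}$ as a power series in $\beta$, applies Lemma \ref{lem:DThetadet1} term by term, and shows that the coefficient of $\beta^k/k!$ for $k\ge 1$ vanishes through the alternating binomial sum $\sum_h\binom{k}{h}(-1)^h=0$ (each extra factor $\tr(\Theta^2)^{k-h}\tr(\Theta)^{2h}$ raises $m$ by $2k$ and $\ell$ by $k+h$, so the powers of $-1/2$ and $1/2$ combine to $(-1)^h$). You instead linearize $\frac{\beta}{2}\tr(\Theta)^2$ by a Gaussian auxiliary variable, convert it via the exponential-shift identity into a random translation of $x$, absorb the factor $1+\beta$ by the dilation $\Theta=\mu\Phi$, and then cancel the $\beta$-dependence using $\E_Z[H_k(a+sZ)]=(1-s^2)^{k/2}H_k(a/\sqrt{1-s^2})$; the exact cancellation $\mu^{m-n}\cdot\mu^{n-m}=1$ and the identity $1-\mu^2\beta=\mu^2$ both check out, as does the extension to $\beta<0$ by noting the left side is a polynomial in $\beta$ (the operator has order at most $n$). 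Your argument is in effect the probabilistic reading that the paper only introduces afterwards, in (\ref{B}) and Lemma \ref{lem:DBdet}, where $B=\sqrt{2(1+\beta)}A+\sqrt{\beta}\,\xi I_n$: conditioning on $\xi$ is exactly your shift of $x$, and the GOE rescaling plus the Hermite--Gaussian convolution is your step 3--4. What your route buys is a structural explanation of \emph{why} the answer is independent of $\beta$; what the paper's route buys is that it stays entirely combinatorial, requires no positivity of $\beta$ or analytic continuation, and needs no justification of interchanging an expectation with a differential operator.
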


\begin{proof}
(\ref{theta2x2}) with $\beta=0$ holds from Lemma \ref{lem:DThetadet1}.
We demonstrate that the left-hand side of (\ref{theta2x2}) is independent of $\beta$.
Using the expansion around $\beta=0$,
\begin{align*}
 e^{\beta (\tr(\Theta^2)+\frac{1}{2}\tr(\Theta)^2)}
 = \sum_{k\ge 0}\frac{\beta^k}{k!} \sum_{0\le h\le k}\binom{k}{h}\tr(\Theta^2)^{k-h}\tr(\Theta)^{2h} (1/2)^h,
\end{align*}
the left-hand side of (\ref{theta2x2}) becomes a series in $\beta$.
The coefficient of $\beta^k/k!$ is
\begin{align*}
 \sum_{0\le h\le k} \binom{k}{h} (-1/2)^{2k-(k+h)} (n)_{2k} H_{n-2k}(x) (1/2)^h =0
\end{align*}
except when $k=0$.
\end{proof}

\begin{proof}[Proof of Lemma \ref{lem:DThetadet}]
Recall that $\gamma=\sqrt{\alpha/2-\beta}$.
(\ref{theta2x2}) in Lemma \ref{lem:DThetadet2} with $\beta:=\beta/\gamma^2$, $\Theta:=\iii\gamma\Theta$, and $x:=-x$ yields (\ref{theta2x}).
\end{proof}

\subsection{Conditional asymptotic expansion and proof of Lemma \ref{lem:validity}}
\label{sec:asympt}

We begin by summarizing the asymptotic expansion for the probability density function and the moment in the i.i.d.\ setting of
%\cite{petrov:1975}  %Theorem VII.15 (p.206), VII.17 (p.211) (univariate case) and
\cite{bhattacharya-Rao:2010}. %Theorem 19.2 (p.192) (multivariate case).

Let $q_1(x)$, $x\in\R^k$, be the probability density of a random vector $X$ with a zero mean and covariance $\Sigma\succ 0$.
Let $\psi_1(t)$ be the characteristic function of $X$. 
Let $q_N(x)$, $x\in\R^k$, be the probability density with the characteristic function $\psi_N(t)=\psi_1(t/\sqrt{N})^{N}$.
Let $\phi_k(x;\Sigma)$ be the probability density of the Gaussian distribution $\mathcal{N}_k(0,\Sigma)$.
Assume that the $s$th moment exists under $q_1$.
Then,
\[
 \log\psi_N(t) = N \log\psi_1\Bigl(\frac{t}{\sqrt{N}}\Bigr)
 = -\frac{1}{2}t^\top\Sigma t + \sum_{j=3}^s \frac{N^{-\frac{1}{2}(j-2)}\iii^j}{j!} \sum_{i:|i|=j} c_i t^i + N r_s\Bigl(\frac{t}{\sqrt{N}}\Bigr),
\]
where $t=(t^1,\ldots,t^k)^\top$ and $i=(i_1,\ldots,i_k)$ is a multi-index such that
$c_i=(c_{i_1},\ldots,c_{i_k})\in\R^k$,
$t^i=(t^1)^{i_1}\cdots (t^k)^{i_k}$,
and $r_s(t)$ is a function such that $r_s(t)=o(|t|^{s-2})$.
Let
\begin{equation}
\label{truncated_mgf}
 \widehat\psi_N^{(s)}(t) = e^{-\frac{1}{2}t^\top\Sigma t}\Biggl(1 + \sum_{j=3}^s N^{-\frac{1}{2}(j-2)} \iii^j F_{3(j-2)}(t)\Biggr),
\end{equation}
where the inside of the parentheses represents the expansion of
\[
 \exp\Biggr(\sum_{j=3}^s \frac{N^{-\frac{1}{2}(j-2)}\iii^j}{j!} \sum_{i:|i|=j} c_i t^i\Biggl)
\]
around $N=\infty$ up to the order of $N^{-\frac{1}{2}(s-2)}$.
Here, $F_{3(j-2)}(t)$ is an even or odd polynomial in $t$ of degree $3(j-2)$.
Let
\begin{equation}
\label{truncated_pdf}
 \widehat q_N^{(s)}(x) = \phi_k(x;\Sigma)\biggl(1 + \sum_{j=3}^s N^{-\frac{1}{2}(j-2)} G_{3(j-2)}(x)\biggr), \quad x=(x_1,\ldots,x_k),
\end{equation}
be the Fourier inversion of $\widehat\psi_N^{(s)}(t)$.
$G_{3(j-2)}(x)$ is an even or odd polynomial in $x$ of degree $3(j-2)$.

% Theorem 19.1 (p.189), Theorem 19.2 (p.192)
\begin{proposition}[{Corollary to \cite[Theorems 19.1 and 19.2]{bhattacharya-Rao:2010}}]
\label{prop:B-R}
Assume that the probability density function $q_N(x)$ exists for $N\ge 1$,
and is bounded for some $N$.
Assume that $\E[\Vert X\Vert^s]<\infty$ under $q_1$.
Then,
$q_N(x)$ is continuous for a sufficiently large $N$, and
\[
 \sup_{x\in\R^k}(1+\Vert x\Vert^s)\bigl|q_N(x)-\widehat q_N^{(s)}(x)\bigr| = o\bigl(N^{-\frac{1}{2}(s-2)}\bigr) \quad\mbox{as}\ N\to\infty.
\]
\end{proposition}

Write $x=(x_1,x_2)$, $x_1\in\R^{k_1}$, $x_2\in\R^{k_2}$ ($k_1+k_2=k$).
In the following, $x_2$ is assumed to be a constant vector $x_{20}$.

\begin{corollary}
\label{cor:B-R}
Let $f(x_1)$ be a function such that $|f(x_1)|\le C (1+\Vert x_1\Vert^{s_1})$.
For $s\ge s_1+k_1+1$ and for $s_0\le s$,
\[
 \int_{\R^{k_1}}f(x_1) q_N(x_1,x_{20}) \dd x_1 = \int_{\R^{k_1}}f(x_1) \widehat q_N^{(s_0)}(x_1,x_{20}) \dd x_1 +o\bigl(N^{-\frac{1}{2}(s_0-2)}\bigr).
\]
\end{corollary}

\begin{proof}
\begin{align*}
 \biggl|\int_{\R^{k_1}} f(x_1) q_N(x_1,x_{20}) \dd x_1 - & \int_{\R^{k_1}}f(x_1) \widehat q_N^{(s_0)}(x_1,x_{20}) \dd x_1 \biggr| \\
\le&  \int_{\R^{k_1}}|f(x_1)| |q_N(x_1,x_{20}) - \widehat q_N^{(s)}(x_1,x_{20}) | \dd x_1
\\
 &+ \int_{\R^{k_1}}|f(x_1)| |\widehat q_N^{(s)}(x_1,x_{20}) - \widehat q_N^{(s_0)}(x_1,x_{20})| \dd x_1.
\end{align*}
The first term is bounded above by
\[
 o\bigl(N^{-\frac{1}{2}(s-2)}\bigr)\times
 C \int_{\R^{k_1}} \frac{1+\Vert (x_1,x_{20})\Vert^{s_1}}{1+\Vert (x_1,x_{20})\Vert^{s}} \dd x_1.
\]
This integral exists when $s-s_1-(k_1-1)>1$.

For the second term, because
$\widehat q_N^{(s)}(x_1,x_{20}) - \widehat q_N^{(s_0)}(x_1,x_{20})$ is
\[
 \phi_k((x_1,x_{20});\Sigma)\times (\mbox{a polynomial in $x_1$}),
\]
the integral exists, and the coefficients of the polynomials are multiples of \\
$N^{-\frac{1}{2}(s_0+1-2)},\ldots,N^{-\frac{1}{2}(s-2)}$ (if $s_0<s$), or 0 (if $s_0=s$). 
Therefore, the second term is $O\bigl(N^{-\frac{1}{2}(s_0-1)}\bigr)$ when $s_0<s$.

The sum of the first and second term is
$o\bigl(N^{-\frac{1}{2}(s-2)}\bigr) + O\bigl(N^{-\frac{1}{2}(s_0-1)}\bigr)\1_{\{s_0<s\}}=o\bigl(N^{-\frac{1}{2}(s_0-2)}\bigr)$.
\end{proof}

\begin{proof}[Proof of Lemma \ref{lem:validity}]
We apply Corollary \ref{cor:B-R} to evaluate (\ref{Xin}).
The characteristic function $\psi_N(t)$ and its truncated version
$\widehat\psi_N^{(s)}(t)$ in (\ref{truncated_mgf}) are given by
 $\psi_N(\s,T,\Theta)$ in (\ref{mgf}) and $\widehat\psi_N(\s,T,\Theta)$ in (\ref{mgf-hat}), respectively.
$q_N(x)$ and its truncated version $\widehat q_N^{(s)}(x)$ in (\ref{truncated_pdf}) are $p_N(x,V,R)$ and $\widehat p_N(x,V,R)$ used in (\ref{mgf_V0-hat}), respectively.
Let $x_1=(X,R)$, $x_2=V=x_{20}=0$, and 
$f(x_1)=\1_{\{X\ge x\}}\det(-R+\gamma X I)$.
Then,
$\int_{\R^{k_1}} f(x_1) q_N(x_1,0) \dd x_1 = \Xi_{n,N}(x)$ and
$\int_{\R^{k_1}} f(x_1) \widehat q_N(x_1,0) \dd x_1 = \widehat \Xi_{n,N}(x)$.
Here, $k_1=1+n(n+1)/2$, $k_2=n$, and $s_1=n$.
Note that $s_1+k_1+1=\binom{n+2}{2}+1$.
The constant $C$ in $|f(x_1)|\le C(1+\Vert x_1\Vert^{s_1})$ in Corollary \ref{cor:B-R} can be chosen independently of $x$.
Hence, the remainder term is irrespective of $x$.

In Theorem \ref{thm:expansion}, we choose $s_0=4$.
If the joint density is bounded and has a moment of the order
$s\ge \max(\binom{n+2}{2}+1,s_0)=\binom{n+2}{2}+1$, the remaining term is $o\bigl(N^{-\frac{1}{2}(s_0-2)}\bigr)=o(N^{-1})$ at least, and the manipulation of asymptotic expansion is validated.
\end{proof}

\bibliographystyle{amsalpha}
\bibliography{minkowski-bib-v3.bib}
\end{document}